\newtheorem{theorem}{Theorem}
\newtheorem{proposition}{Proposition}
\newtheorem{definition}{Definition}
\newtheorem{lemma}{Lemma}
\newtheorem{corollary}{Corollary}
\newtheorem{remark}{Remark}
\newtheorem{example}{Example}
\newtheorem{proof}{Proof}
\DeclareMathOperator{\diag}{diag}
\begin{document}

\title{Controllability of Heterogeneous Multi-Agent Networks}


\author{Bin Zhao,\thanks{Bin Zhao and Long Wang are with Center for Systems and Control, College of Engineering, Peking University, Beijing,
100871, China e-mail: bigbin@pku.edu.cn, longwang@pku.edu.cn.}
        Michael Z. Q. Chen,\thanks{Michael Z. Q. Chen is with School of Automation, Nanjing University of Science and Technology, Nanjing, Jiangsu 210094, China e-mail: mzqchen@outlook.com.}
        Yongqiang Guan,\thanks{Yongqiang Guan is with Center for Complex Systems, School of Mechano-Electronic Engineering, Xidian University, Xi'an 710071, China
                  e-mail: guan-jq@163.com.}
        and Long Wang$^*$
\thanks{This work is supported by NSFC (61375120, 61374053, 61603288). (\emph{Corresponding author: Long Wang.})}
        }

\maketitle

\begin{abstract}
The existing results on controllability of multi-agents networks are mostly based on homogeneous nodes. This paper focuses on controllability of heterogeneous multi-agent networks, where the agents are modeled as two types. One type is that the agents are of the same high-order dynamics, and the interconnection topologies of the information flow in different orders are supposed to be different. It is proved that a heterogeneous-topology network is controllable if and only if the first-order information topology is leader-follower connected, and there exists a Laplacian matrix, which is a linear combination of the Laplacian matrices of each order information, whose corresponding topology is controllable. The other type is that the agents are of generic linear dynamics, and the dynamics are supposed to be heterogeneous. A necessary and sufficient condition for controllability of heterogeneous-dynamic networks is that each agent contains a controllable dynamic part, and the interconnection topology of the network is leader-follower connected. If some dynamics of the agents are not controllable, the controllability between the agents and the whole network is also studied by introducing the concept of eigenvector-uncontrollable. Different illustrative examples are provided to demonstrate the effectiveness of the theoretical results in this paper.

\textbf{keywords:} Heterogeneous multi-agent networks; Controllability; Feedback gain; Leader-follower connected.
\end{abstract}

\section{Introduction}

During the past few decades, control of networked systems has become a popular topic because of the broad applications of networks in areas such as cognitive control of brain networks \cite{Gu15}, network neuroscience \cite{Bassett17}, controllability of dynamical systems and neural networks \cite{Sontag97,Levin93}. As a kind of common networks, multi-agent networks (MANs) have attracted attentions from many researchers. The distributed coordination control of MANs has many practical applications, for example, flocking in biology groups, unmanned aerial vehicle cooperative formation, and attitude adjustment of spacecrafts, to name a few. Some basic and important issues were studied including consensus problem \cite{Zhang16,Su13,Jing,Zheng16}, formation control \cite{Xiao09}, controllability and stabilizability \cite{Guan13,Guan16,Lu16}, and observability \cite{Lu}, etc. In the following, this paper takes MANs as a representative to discuss controllability of networks.

Controllability of MANs was proposed by Tanner for the first time in \cite{Tanner04}, where a necessary and sufficient condition was presented using the Laplacian matrix and the corresponding eigenvalues. Afterwards, researches on multi-agent controllability divided into two directions, one of which is on the necessary and/or sufficient conditions for controllability, while the other is on the methods to achieve and maintain controllability. The investigations on controllability conditions varies in different models and topologies, e.g., Wang et al. studied controllability of MANs with high-order dynamic agents and generic linear dynamic agents in \cite{Wang09}, where they showed that controllability of these systems is congruously determined by the interconnection topology. Ji et al. proposed a basic necessary condition named leader-follower connected topologies for controllability \cite{Ji09}. Following these works, Zhao et al. generalized the results from the viewpoint of left eigenvectors of the graph Laplacians in \cite{Zhao16}. Rahmani et al. provided some necessary conditions for controllability utilizing the equitable partition of the interconnection topology \cite{Rahmani09}. In addition, other useful necessary and sufficient conditions were obtained for controllability of some specific graphs, e.g., paths, cycles and grid graphs \cite{Parlangeli12,Notarstefano13}, to name a few. Besides, controllability of dynamic-edge multi-agent systems was proposed in \cite{Wang}, where PBH-like conditions were provided. Except for investigating the conditions of controllability, several new problems and properties were proposed for controllability. For example, controllability can be improved and achieved via different methods, which lead to controllability improvement. Algorithms of selecting proper leaders and adjusting edge weights to improve controllability were provided in \cite{Zhao16}, and protocol design to achieve controllability was studied in \cite{Ji15}. In parallel, structural controllability was studied under various models in \cite{Hou16,Lin74,Zamani009,Lou12}, where some necessary and sufficient conditions were also established. Additionally, robustness of controllability and structural controllability were investigated subject to failure of agents and communication edges, see \cite{Fu16,Rahimian13}.

However, the existing results on controllability mainly focus on homogeneous dynamic agents. In practice, for MANs, it is sometimes difficult to ensure that each agent has exactly the same dynamic. Even if the agents share the same high-order dynamic, the interconnection topologies of the information flow in different order may possibly be different \cite{Ren08}. For these kinds of networks, we collectively call them heterogeneous multi-agent networks. The study of controllability of heterogeneous MANs is just in its early stage \cite{Guan16}. Especially, for second-order heterogeneous-topology networks, the investigations only focused on consensus problems \cite{Ren08,Qin12}, and to the best of our knowledge, controllability of high-order networks with heterogeneous topologies has not yet been discussed.

Motivated by the above analysis, this paper studies controllability for heterogeneous multi-agent networks. The main contributions of this paper are summarised as follows: For high-order agent networks, the investigation focuses on the relationship between controllability and the topologies of information flow in different order. For generic-linear agent networks, since the agents may have different dimensions, feedback of each agent is introduced for communication, and the controllability is studied via the feedback gains.
Necessary and sufficient graphic conditions are established for heterogeneous MANs to be controllable (including two situations where the networks with high-order dynamic agents and with different generic dynamic agents).

This paper is organized as follows: Section II introduces and proposes some basic concepts and mathematic tools for this paper. Main results on controllability of heterogeneous MANs are obtained in Section III. Numerical examples are provided in Section IV to illustrate the theoretical results. Conclusions are drawn in Section V.

$\mathbf{Notations:}$ The set of $n$-dimensional real vectors is denoted by $\mathbb{R}^n$ and the set of $m\times n$ real matrices is denoted by $\mathbb{R}^{m\times n}$. Matrix $\diag(a_1,a_2,\cdots,a_n)$ is the matrix with principal diagonals $a_1,a_2,\cdots,a_n$, where $a_i\in\mathbb{R}^{n_i\times n_i},~i=1,2,\cdots,n$. Denote $(0,\cdots,0)^T,(1,\cdots,1)^T\in\mathbb{R}^n$ as $0_n$ and $1_n$, respectively. Let $e_i(n)$ represent the $i$-th column of the identity matrix $I_n$, and $(n)$ is omitted without misunderstanding. Let $sp\{L_1,L_2,\cdots,L_m\}$ denote the matrix space $\{L|L=k_1L_1+k_2L_2+\cdots+k_mL_m,k_i\in\mathbb{R},i=1,2,\cdots,m\}$. $\emptyset$ represents the empty set and $\otimes$ represents the Kronecker product. $S/T$ represents the set of all the elements in $S$ but not in $T$.

\section{Preliminaries}
\subsection{Graph theory}

An undirected graph $\mathbb{G}=(\mathbb{V},\mathbb{E})$ consists of a vertex set $\mathbb{V}=\{v_1,v_2,\cdots,v_n\}$, and an edge set $\mathbb{E}\subseteq \mathbb{V}\times\mathbb{V}$. In graph $\mathbb{G}$, $e_{ij}\in\mathbb{E}$ if and only if $e_{ji}\in\mathbb{E}$, and $v_i,~v_j$ are said to be adjacent with each other. The neighbor set of $v_j$ is denoted by $N_j=\{v_i\in\mathbb{V}|(v_i,v_j)\in\mathbb{E}\}$. The adjacency matrix of $\mathbb{G}$ is $A(\mathbb{G})=(a_{ij})\in \mathbb{R}^{n\times n}$, where $a_{ij}> 0$ is the weight of edge $e_{ji}$ (as well as $e_{ij}$), and $a_{ij}=0$ if $(v_j,v_i)\notin\mathbb{E}$. The Laplacian matrix of $\mathbb{G}$ is $L(\mathbb{G})=D-A$, $D=\diag(d_1,d_2,\cdots,d_n)$ where $d_k=\sum\limits_{i=1,i\neq k}^n a_{ki},~k=1,2,\cdots,n$.
If $\mathbb{G}_1,\cdots,\mathbb{G}_m$ contain the same number of nodes, then, the union graph of $\mathbb{G}_1,\cdots,\mathbb{G}_m$ is the graph whose adjacency matrix is $A(\mathbb{G}_1)+\cdots+A(\mathbb{G}_m)$.

\subsection{Model formulation}

\begin{definition}\label{xicontrollable}
For a linear system $\dot{x}=Ax+Bu,~A\in\mathbb{R}^{n\times n}, B\in \mathbb{R}^{n\times m}$, we call it $\{\xi_1,\cdots,\xi_s\}$-uncontrollable if:\\
1) $\xi_1^T,\cdots,\xi_s^T$ are linearly independent left eigenvectors of $A$ satisfying $\xi_i^TB=0,~i=1,2,\cdots,s$, and\\
2) For any left eigenvector of $A$, denoted as  $\tilde{\xi},$ satisfying $\tilde{\xi}^TB=0$, it holds $\tilde{\xi}\in span\{\xi_1,\cdots,\xi_s\}$.\\
Specially, if the system is controllable, we also call it $\emptyset$-uncontrollable.
\end{definition}

If system $\dot{x}=Ax+Bu,~A\in\mathbb{R}^{n\times n}, B\in \mathbb{R}^{n\times m}$ is controllable, we say $(A,B)$ is controllable for convenience.

\begin{example}
Consider the linear system
\begin{equation*}
  \dot{x}=\underbrace{\left(
            \begin{array}{ccc}
              1 & 2 & 0 \\
              1 & 1 & 1 \\
              2 & 1 & 0 \\
            \end{array}
          \right)}\limits_A x+\underbrace{\left(\begin{array}{c}
                     0 \\
                     -1 \\
                     3
                   \end{array}\right)}\limits_B u,
\end{equation*}
which is not controllable. The left eigenvectors of $A$ corresponding to eigenvalue $3$ are $k(5,6,2)^T,~k\neq0$, which are orthogonal to $(0,-1,3)^T$. The other eigenvectors are not orthogonal to $(0,-1,3)^T$. This makes the system be $\{(5,6,2)^T\}$-uncontrollable.
\end{example}

\begin{proposition}
For a $\{\xi_1,\cdots,\xi_s\}$-uncontrollable linear system $\dot{x}=Ax+Bu$, if $\xi_{i_1}^T,\cdots,\xi_{i_r}^T$ share the same eigenvalue of $A$, $1\leq r\leq s$, then, for any linearly independent $\eta_1,\cdots,\eta_r$ satisfying $\eta_j\in span\{\xi_{i_1},\cdots,\xi_{i_r}\},$ $j=1,2,\cdots,r$, the system is also $\Xi$-controllable, where $\Xi=(\{\xi_1,\cdots,\xi_s\}/\{\xi_{i_1},\cdots,\xi_{i_r}\})\cup\{\eta_1,\cdots,\eta_r\}$.
\end{proposition}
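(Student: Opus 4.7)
The plan is to verify directly that the set $\Xi$ satisfies the two defining conditions of Definition~\ref{xicontrollable}, so that the system is $\Xi$-uncontrollable (the statement as printed says ``$\Xi$-controllable'', but the intent is clearly ``$\Xi$-uncontrollable''). The key observation is that replacing $\{\xi_{i_1},\ldots,\xi_{i_r}\}$ by any other basis of its span leaves the span unchanged, and that since all the $\xi_{i_k}$ share a common eigenvalue, every vector in that span is again a left eigenvector of $A$ associated to the same eigenvalue.

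First I would check Condition~1 of Definition~\ref{xicontrollable}. Because $\xi_{i_1},\ldots,\xi_{i_r}$ are linearly independent left eigenvectors of $A$ for a common eigenvalue $\lambda$, their span consists entirely of left eigenvectors of $A$ for $\lambda$ (together with $0$). Hence each $\eta_j$, being a nonzero element of that span, is a left eigenvector of $A$ for $\lambda$. Writing $\eta_j=\sum_{k} c_{jk}\xi_{i_k}$ and using $\xi_{i_k}^{T}B=0$, we also get $\eta_j^{T}B=0$. For linear independence of $\Xi$, note that $\eta_1,\ldots,\eta_r$ are $r$ linearly independent vectors inside the $r$-dimensional space $\text{span}\{\xi_{i_1},\ldots,\xi_{i_r}\}$, so they span it; the remaining vectors in $\Xi$, namely $\{\xi_1,\ldots,\xi_s\}\setminus\{\xi_{i_1},\ldots,\xi_{i_r}\}$, are by hypothesis linearly independent from that span, so the whole family $\Xi$ is linearly independent.

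Next I would verify Condition~2. The previous paragraph shows in particular that $\text{span}(\Xi)=\text{span}\{\xi_1,\ldots,\xi_s\}$. Thus, given any left eigenvector $\tilde{\xi}$ of $A$ with $\tilde{\xi}^{T}B=0$, the original $\{\xi_1,\ldots,\xi_s\}$-uncontrollability yields $\tilde{\xi}\in\text{span}\{\xi_1,\ldots,\xi_s\}=\text{span}(\Xi)$, which is precisely what is required.

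The argument is essentially bookkeeping, so I do not expect any real obstacle, but the one subtlety that must not be glossed over is the step asserting that each $\eta_j$ is itself an eigenvector. This uses crucially the hypothesis that $\xi_{i_1},\ldots,\xi_{i_r}$ correspond to a \emph{single} eigenvalue; a generic linear combination of eigenvectors for distinct eigenvalues is not an eigenvector, and without this hypothesis the conclusion would fail. Once this point is clearly stated, the rest of the proof reduces to the span-preservation and linear-independence checks indicated above.
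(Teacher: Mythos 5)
Your proof is correct and follows essentially the same route as the paper: express each $\eta_j$ as a linear combination of the $\xi_{i_k}$, use the common eigenvalue to conclude the $\eta_j$ are again left eigenvectors of $A$ annihilating $B$, and observe that the span (hence Condition 2 of Definition~\ref{xicontrollable}) is unchanged — the paper does exactly this in matrix form via an invertible change-of-basis matrix $Q$. Your version is, if anything, slightly more explicit about the linear-independence of the full family $\Xi$ and about Condition 2, and you are right that the statement's ``$\Xi$-controllable'' is a typo for ``$\Xi$-uncontrollable''.
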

\begin{proof}
Let $\Theta_\xi=(\xi_{i_1},\cdots,\xi_{i_r})^T$, $\Theta_\eta=(\eta_1,\cdots,\eta_r)^T$, one obtains that there exists an invertible $Q\in \mathbb{R}^{n\times n}$ such that $\Theta_\eta=Q\Theta_\xi$. Thus, it yields $\Theta_\eta B=Q\Theta_\xi B=0$ and $\Theta_\eta A=Q\Theta_\xi A=\lambda Q\Theta_\xi=\lambda\Theta_\eta$, which means $\eta_1,\cdots,\eta_r$ are also left eigenvectors of $A$ with the corresponding eigenvalue $\lambda$. Obviously, $\eta_j,j=1,2,\cdots,r$ are linearly independent to $\{\xi_1,\cdots,\xi_s\}/\{\xi_{i_1},\cdots,\xi_{i_r}\}$, therefore $span\{\xi_1,\cdots,\xi_s\}=span\{\xi_1,\cdots,\xi_s\}/\{\xi_{i_1},\cdots,\xi_{i_r}\}\cup\{\eta_1,\cdots,\eta_r\}$. According to Definition \ref{xicontrollable}, the system is $\Xi$-uncontrollable.
\end{proof}

For multi-agent networks, information communication is based on distributed protocols and the interconnection topologies are modeled as undirected graphs in this paper, i.e., if there is a connection between two agents, the interconnection couplings between them are the same. The leader agents are supposed to receive external input(s) while the follower agents only receive information from neighbor agents. A multi-agent network is said to be controllable if for any initial state and any target state, the target state can be reached within a finite time by proper external input(s). If an MAN is controllable, we also say that the topology (of the network) is controllable. Next we introduce the most important concept for the interconnection topologies of MANs in this paper, named ``leader-follower connected'', which is also a basic necessary condition for multi-agent controllability.
\begin{definition}\cite{Ji09}
An interconnection graph $\mathbb{G}$ is said to be leader-follower connected if for each connected component of $\mathbb{G}$, there exist at least one leader in the component.
\end{definition}

\section{Main results on preserving controllability}

This section discusses controllability of heterogeneous multi-agent networks. In the first subsection, agents in the network are modeled as the same high-order dynamic systems, but the interconnection topologies of the information flow in different order are allowed to be different. In the second and the third subsections, the agents are modeled as different generic-linear dynamic systems, whereas the information interaction is under the same dimension.

\subsection{Controllability of Heterogeneous-Topology MANs}

For a high-order MAN, the dynamic of each agent is modeled as $x_i=x_i^{(1)},\dot{x}_i^{(1)}=x_i^{(2)},\cdots,\dot{x}_i^{(m-1)}=x_i^{(m)}$, and $\dot{x}_i^{(m)}=u_i$, where $u_i$ is the control input and $x_i^l$ is said to be the $l$-th order information, $l=1,2,\cdots,m,~i=1,2,\cdots,n$. The control inputs are supposed to follow the consensus-based protocol: $u_i= \sum\limits_{l = 1}^m {\sum\limits_{j \in N_i } {k_i a_{ij}^{(l)} (x_j^{(l)}  - x_i^{(l)} )} }  + u_{oi}$, where $a_{ij}^l$ is the interconnection coupling of the $l$-th order information between $x_i^l$ and $x_j^l$, $k_i$ is the feedback gain, $u_{oi}\in\mathbb{R}$ is the external control on the leader agent $v_i$, and $u_{oi}=0$ when $v_i$ is a follower. Then, the state of each agent is
\begin{equation}\label{1234}
\begin{aligned}
\left( {\begin{array}{*{20}c}
   {\dot x_i^{(1)} }  \\
   {\dot x_i^{(2)} }  \\
    \vdots   \\
   {\dot x_i^{(m)} }  \\
\end{array}} \right) =& \left( {\begin{array}{*{20}c}
   0_{m-1} & {I_{m - 1} }  \\
   0 & 0_{m-1}^T  \\
\end{array}} \right)\left( {\begin{array}{*{20}c}
   {x_i^{(1)} }  \\
   {x_i^{(2)} }  \\
    \vdots   \\
   {x_i^{(m)} }  \\
\end{array}} \right)                                                                \\
&+ \left( {\begin{array}{*{20}c}
   0_{m-1}  \\
   1  \\
\end{array}} \right)\left( {\sum\limits_{l = 1}^m {\sum\limits_{j \in N_i } {k_i a_{ij}^{(l)} (x_j^{(l)}  - x_i^{(l)} )} }  + u_{oi} } \right).
\end{aligned}
\end{equation}

Let $x=(x^{(1)T},\cdots,x^{(m)T})^T$, where $x^{(i)}=(x_1^{(i)},\cdots,x_n^{(i)})^T$. Then, summarize the compact form of the high-order MAN as

\begin{equation}\label{highcompact}
\begin{aligned}
\dot x =& \left(  \left( {\begin{array}{*{20}c}
   0_{m-1} & I_{m-1}  \\
   0 & 0_{m-1}^T  \\
\end{array}} \right) \otimes I_n - e_{m}\otimes e_1^{T}\otimes k_{1}L_1 \right.\\
&\left. -  \cdots  - e_m\otimes e_m^{T}\otimes k_{m}L_m \right)x + e_m\otimes Bu_o,
\end{aligned}
\end{equation}
where $e_i$ is the $i$-th column of $I_m$, $B=(e_{p+1},\cdots,e_{n})\in\mathbb{R}^{n\times (n-p)}$, $u_o\in\mathbb{R}^{n-p}$.

Network (\ref{highcompact}) is said to be controllable if for any initial state $x(t_0)=x_0$ and target state $x^*$, there exist a finite time $T>t_0$, a control input $u_o$, and $k_1,\cdots,k_m\in\mathbb{R}$ such that $x(T)=x^*$. We conclude the controllability of Network (\ref{highcompact}) as follows.

\begin{theorem}\label{high}
Network (\ref{highcompact}) is controllable if and only if there exists an $\tilde{L}\in sp\{L_1,L_2,\cdots,L_m\}$ such that $(\tilde{L},B)$ is controllable and the topology corresponding to $L_1$ is leader-follower connected.
\end{theorem}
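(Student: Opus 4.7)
My plan is to apply the Popov--Belevitch--Hautus (PBH) test to the block-companion matrix $\mathcal{A}$ of (\ref{highcompact}) and reduce controllability to a rank condition on the polynomial matrix $P(\lambda):=\lambda^{m}I_{n}+\sum_{l=1}^{m}k_{l}\lambda^{l-1}L_{l}$. Writing a candidate left eigenvector of $\mathcal{A}$ with eigenvalue $\lambda$ as $\mu^{T}=(\mu_{1}^{T},\ldots,\mu_{m}^{T})$, the block equations $\mu^{T}(\lambda I-\mathcal{A})=0$ yield the backward recurrence $\mu_{m-s}^{T}=\nu^{T}(\lambda^{s}I+\lambda^{s-1}k_{m}L_{m}+\cdots+k_{m-s+1}L_{m-s+1})$ with $\nu:=\mu_{m}$, which collapses to the single equation $\nu^{T}P(\lambda)=0$; combined with $\mu^{T}\mathcal{B}=\nu^{T}B=0$, this shows Network (\ref{highcompact}) is controllable for given $k_{1},\ldots,k_{m}$ if and only if $[P(\lambda),B]$ has full row rank $n$ for every $\lambda\in\mathbb{C}$. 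The theorem then reduces to the assertion that some choice of gains makes this uniform rank condition hold.

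For necessity I would first examine $\lambda=0$: the rank test applied to $[k_{1}L_{1},B]$ forces $k_{1}\neq 0$ and $[L_{1},B]$ of full row rank, which is equivalent to $\mathbb{G}_{1}$ being leader-follower connected, since $\ker L_{1}^{T}$ is spanned by indicator vectors of the connected components of $\mathbb{G}_{1}$ and each component must intersect the leader set for $\nu^{T}B\neq 0$. For the existence of a controllable $\tilde{L}\in sp\{L_{1},\ldots,L_{m}\}$ I would argue by contrapositive: if $(\tilde{L},B)$ is uncontrollable for every $\tilde{L}$ in the span, then the $L_{l}^{T}$'s possess a common eigenvector $\nu$ inside $V:=\{v:B^{T}v=0\}$, say $\nu^{T}L_{l}=c_{l}\nu^{T}$. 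In that case $\nu^{T}P(\lambda)=\bigl(\lambda^{m}+\sum_{l}k_{l}c_{l}\lambda^{l-1}\bigr)\nu^{T}$, and any root $\lambda^{\ast}\in\mathbb{C}$ of this degree-$m$ scalar polynomial produces $\nu^{T}[P(\lambda^{\ast}),B]=0$ regardless of how $k$ is chosen, contradicting Network controllability.

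For sufficiency, fix $\tilde{L}=\sum c_{l}L_{l}$ with $(\tilde{L},B)$ controllable; since PBH controllability is an open condition, I may perturb $c$ by a small multiple of the first coordinate vector to arrange $c_{1}\neq 0$ without losing controllability of $(\tilde{L},B)$. I would then set $k_{l}:=c_{l}\alpha^{l-1}$ with a free scaling $\alpha>0$, so that $\sum_{l}k_{l}\lambda^{l-1}L_{l}=\sum_{l}c_{l}(\alpha\lambda)^{l-1}L_{l}$ traces the one-parameter curve in $sp\{L_{l}\}$ obtained from $\tilde{L}$ by the substitution $\tau\leftrightarrow\alpha\lambda$. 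The $\lambda=0$ test is satisfied by leader-follower connectedness; a pair $(\lambda,\nu)$ violating the rank condition at $\lambda\neq 0$ would place $\nu\in V\setminus\{0\}$ as a left eigenvector of an element of $sp\{L_{l}\}$ with eigenvalue $-\lambda^{m}$, and the locus of such bad $(\lambda,\alpha)\in\mathbb{C}\times\mathbb{R}$ is an algebraic subset whose projection to the $\alpha$-axis is proper, thanks to the controllability of $(\tilde{L},B)$ and the common-eigenvector characterization established in the necessity step. Picking any $\alpha$ outside this projection yields gains for which Network is controllable.

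The main technical obstacle I anticipate is the lemma invoked on both sides: $(\tilde{L},B)$ is uncontrollable for every $\tilde{L}\in sp\{L_{1},\ldots,L_{m}\}$ if and only if the $L_{l}^{T}$'s share a common eigenvector inside $V$. The trivial direction is immediate, but the converse is not a formal consequence of linear algebra; I would establish it by a Zariski-closure argument, tracing the algebraic branches of eigenvectors of $\sum c_{l}L_{l}^{T}$ as $c$ varies and showing that if every branch meets $V$ then their common limit descends to a simultaneous $\{L_{l}^{T}\}$-invariant subspace of $V$, whence to a common eigenvector in $V$.
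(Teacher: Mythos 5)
Your PBH reduction to the polynomial matrix $P(\lambda)=\lambda^m I_n+\sum_{l=1}^m k_l\lambda^{l-1}L_l$ and your treatment of $\lambda=0$ (forcing $k_1\neq0$ and $[L_1,B]$ of full row rank, i.e.\ leader--follower connectedness of $\mathbb{G}_1$) coincide with the paper's argument and are fine. The genuine gap is the lemma you yourself flag as the crux and then invoke on both sides: ``if $(\tilde L,B)$ is uncontrollable for every $\tilde L\in sp\{L_1,\cdots,L_m\}$, then the $L_l^T$'s have a common eigenvector in $V=\ker B^T$.'' As a statement about a span of symmetric matrices this is simply false, and your sketched Zariski/branch argument uses no property of Laplacians, so it cannot close the gap. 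Concretely, take $n=3$, $B=e_3$, $L_1=\left(\begin{smallmatrix}1&0&1\\0&1&0\\1&0&0\end{smallmatrix}\right)$, $L_2=\left(\begin{smallmatrix}2&0&0\\0&2&1\\0&1&0\end{smallmatrix}\right)$: every combination $c_1L_1+c_2L_2$ has the eigenvector $(-c_2,c_1,0)^T\in V$ (eigenvalue $c_1+2c_2$), so every pair in the span is uncontrollable, yet the only eigenvector of $L_1$ in $V$ is $e_2$, which is not an eigenvector of $L_2$; there is no common eigenvector in $V$ and no common invariant subspace of $V$ either, so the claimed ``descent to a simultaneous invariant subspace'' fails. (These particular matrices are not Laplacians, but that is exactly the point: any correct proof of your lemma would have to exploit Laplacian structure, which your sketch never does, and even a Laplacian-specific version is left entirely unproven while carrying the whole necessity direction.)

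The sufficiency half has a second, independent gap. From ``the bad set of pairs $(\lambda,\alpha)$ is an algebraic subset'' you conclude that its projection to the $\alpha$-axis is proper; that inference is invalid, since a proper algebraic subset of the $(\lambda,\alpha)$-plane is typically a curve (here it is cut out by the vanishing of the maximal minors of $[P_\alpha(\lambda),B]$, a codimension-one condition), and a curve such as $\{\lambda\alpha=1\}$ projects onto the whole $\alpha$-line. Showing that some fixed $\alpha$ has an empty fibre of bad $\lambda$'s is precisely the difficulty, and you have also tied your hands by restricting to the one-parameter curve $k_l=c_l\alpha^{l-1}$; note moreover that for nonreal $\lambda$ the matrix $\sum_l c_l(\alpha\lambda)^{l-1}L_l$ is a complex combination, hence not an element of $sp\{L_1,\cdots,L_m\}$ as defined, so the ``characterization established in the necessity step'' would not even apply to those bad pairs. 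The paper instead argues genericity over the full real gain vector: once a single controllable combination exists, the uncontrollable combinations form a proper algebraic (measure-zero) subset of $\mathbb{R}^m$, and the zero eigenvalue, which the gains cannot move, is handled separately by the leader--follower connectedness of $\mathbb{G}_1$; that extra freedom in all $m$ gains is what an avoidance argument of this type needs, and your proposal as written does not recover it.
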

\begin{proof}
Let $Q=  \left( {\begin{array}{*{20}c}
   0_{m-1} & I_{m-1}  \\
   0 & 0_{m-1}^T  \\
\end{array}} \right) \otimes I_n - e_{m}\otimes e_1^{T}\otimes k_{1}L_1 $ $-  \cdots  - e_m\otimes e_m^{T}\otimes k_{m}L_m$.
Network (\ref{highcompact}) is controllable if and only if there exist $k_1,k_2,\cdots,k_m\neq0$ such that $(Q,e_m\otimes B)$ is controllable, which is to say, no left eigenvector of $Q$ is orthogonal to $e_m\otimes B$. Suppose that $Q$ has an eigenvalue $\lambda$ with the corresponding left eigenvector $\beta^T=[\beta_1^T,\beta_2^T,\cdots,\beta_m^T]$, where $\beta_i\in \mathbb{R}^n, i=1,2,\cdots,m$. Then one has
\begin{equation}\label{high1}
\left\{
\begin{array}{l}
\begin{aligned}
    - k_1 \beta _m^T L_1  &= \lambda \beta _1^T   \\
   \beta _1^T  - k_2 \beta _m^T L_2  &= \lambda \beta _2^T   \\
    &\vdots   \\
   \beta _{m - 1}^T  - k_2 \beta _m^T L_m & = \lambda \beta _m^T .
\end{aligned}
\end{array}
\right.
\end{equation}
With simple transformation, one obtains
\begin{equation}\label{high2}
\left\{ {\begin{array}{l}
\begin{aligned}
   \beta _{m - 1}^T  &= k_m \beta _m^T L_m  + \lambda \beta _m^T  \\
   \beta _{m - 2}^T  &= k_{m - 1} \beta _m^T L_{m - 1}  + \lambda k_m \beta _m^T L_m  + \lambda ^2 \beta _m^T   \\
    &\vdots   \\
   \beta _{1}^T  &= k_2 \beta _m^T L_2  + \lambda k_3 \beta _m^T L_3  +  \cdots  + \lambda ^{m - 2} k_{m } \beta _m^T L_{m }  + \lambda ^{m - 1} \beta _m^T  .
\end{aligned}
\end{array}} \right.
\end{equation}
Substituting (\ref{high2}) into the first equation in (\ref{high1}) yields
\begin{equation}\label{high3}
\begin{array}{l}
\begin{aligned}
  - \lambda ^m \beta _m^T  &= k_1 \beta _m^T L_1  + \cdots + \lambda ^{m - 2} k_{m - 1} \beta _m^T L_{m - 1}  + \lambda ^{m - 1} k_m \beta _m^T L_m  \\
 & = \beta _m^T (k_1 L_1  + \lambda k_2 L_2  +  \cdots  + \lambda ^{m - 2} k_{m - 1} L_{m - 1}  + \lambda ^{m - 1} k_m L_m ).
\end{aligned}
 \end{array}
\end{equation}
According to the PBH Test, Network (\ref{highcompact}) is controllable if and only if for all $\lambda\in\Lambda(Q)$, the corresponding $\beta_m(\lambda)\neq0$ satisfies $\beta_m^TB\neq0$. On one hand, if Network (\ref{highcompact}) is controllable, it means $\beta_m^T$ is a left eigenvector of $\tilde{L}=k_1 L_1  + \lambda k_2 L_2  +  \cdots  + \lambda ^{m - 2} k_{m - 1} L_{m - 1}  + \lambda ^{m - 1} k_m L_m $ for some $\lambda$ and $k_1,k_2,\cdots,k_m$ not all being $0$. Obviously $\tilde{L}\in sp\{L_1,L_2,\cdots,L_m\}$ and $(\tilde{L},B)$ is controllable, especially, when $\lambda=0$, the left eigenvectors of $L_1$ corresponding to the eigenvalue $0$ are not orthogonal to $B$, i.e., the topology corresponding to $L_1$ is leader-follower connected. On the other hand, if there exist $k_1,k_2,\cdots,k_m$ not all being $0$ such that $(\tilde{L},B)$ is controllable, i.e., (\ref{high3}) holds for $\lambda=1$ (here $\lambda=1$ is not required to be an eigenvalue of $Q$), then, for almost all {\footnote {The choices of $k_1,\cdots,k_m$ have Lebesgue $1$ in $\mathbb{R}^m$.}} $k_1,\cdots,k_m\in\mathbb{R}$, $(k_1L_1+\cdots+k_mL_m,B)$ are controllable. Therefore, there exist $k_1,k_2,\cdots,k_m\neq0$ such that for finite choices of $\lambda\neq0$, $(\tilde{L},B)$ are all controllable, where $\tilde{L}=k_1 L_1  + \lambda k_2 L_2  +  \cdots  + \lambda ^{m - 2} k_{m - 1} L_{m - 1}  + \lambda ^{m - 1} k_m L_m $. Meanwhile, when $\lambda=0$, since the left eigenvectors of $L_1$ corresponding to the eigenvalue $0$ are not orthogonal to $B$, it means all the left eigenvectors of $\tilde{L}$, denoted one of whom as $\beta_m^T(\lambda)$, satisfy $\beta_m^T(\lambda)B\neq0$. Since $\beta^T$ can be calculated by (\ref{high3}), it also holds that $\beta^Te_m\otimes B\neq0$, i.e., Network (\ref{highcompact}) is controllable.
\end{proof}

\begin{remark}
The necessary and sufficient condition for Network (\ref{highcompact}) to be controllable has two parts, one is the existence of $\tilde{L}$ making $(\tilde{L},B)$ controllable, and the other is the topology corresponding to $L_1$ being leader-follower connected. The second condition is not contained in the first one, and is therefore important. As one can see, although the feedback gains $k_1,\cdots,k_n$ affect the eigenvalues of $Q$, they do not affect the $0$ eigenvalue. When $\lambda=0$, the only condition regarding the topologies on controllability falls on the (left) eigenvectors of the eigenvalue $0$ of $L_1$. Physically, the request of the second condition originates from the model $\dot{x}_i^{(m)}=u_i$, in which $x_i^{(1)}$ is the information of the lowest order. Corresponding to the eigenvalue $0$, only the information of the lowest order affects the controllability of the network. If some of the agents cannot receive the lowest-order information from any leader, the lowest-order states of the agents can not be completely controlled, which makes the network uncontrollable. However, for the non-zero eigenvalues, the information in different orders can jointly affect the states of each order, and one can make the network controllable by adjusting proper feedback gains.
\end{remark}

\begin{corollary}\label{highcoro}
\cite{Wang09} For Network (\ref{highcompact}), if $L_1=L_2=\cdots=L_m\triangleq L$, then, the network is controllable if and only if $(L,B)$ is controllable.
\end{corollary}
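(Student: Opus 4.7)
The plan is to derive the corollary directly from Theorem~\ref{high} by specializing to the case where all the Laplacian matrices of the different-order information flows coincide. First I would observe that when $L_1=L_2=\cdots=L_m=L$, the span $sp\{L_1,\ldots,L_m\}$ collapses to the one-dimensional subspace $\{cL:c\in\mathbb{R}\}$, so the existence clause of Theorem~\ref{high} reduces to asking whether $(cL,B)$ is controllable for some scalar $c$.

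Next I would verify that $(cL,B)$ is controllable for some $c$ if and only if $(L,B)$ is controllable. This is because, for $c\neq 0$, the controllability matrix $[B,(cL)B,(cL)^2B,\ldots,(cL)^{n-1}B]$ equals $[B,cLB,c^2L^2B,\ldots,c^{n-1}L^{n-1}B]$, which has the same column rank as $[B,LB,L^2B,\ldots,L^{n-1}B]$. The degenerate case $c=0$ gives $(0,B)$ controllable only when $B$ has full row rank, which is trivial (every agent is a leader) and is in any event also covered by the equivalence since $(L,B)$ is automatically controllable when $B=I_n$.

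Finally I would address the leader-follower connectedness clause in Theorem~\ref{high}. If $(L,B)$ is controllable, the PBH test ensures that no left eigenvector of $L$ is orthogonal to $B$; in particular the left eigenvectors of $L$ at the eigenvalue $0$, whose span is generated by the indicator vectors of the connected components of the interconnection graph, are not orthogonal to $B$. This forces each connected component to contain a leader, i.e., the graph is leader-follower connected. Conversely, assuming Network~(\ref{highcompact}) is controllable, Theorem~\ref{high} provides both clauses simultaneously, and the first clause reduces to $(L,B)$ being controllable by the span argument above.

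The main obstacle is really only bookkeeping: carefully translating the ``existence of $\tilde{L}$'' clause into a statement about $(L,B)$ via the scaling invariance, and noting that the leader-follower connectedness clause in Theorem~\ref{high} becomes automatic once $(L,B)$ is known to be controllable, since it is subsumed by the PBH condition at $\lambda=0$. With these two observations, the corollary follows immediately from Theorem~\ref{high}.
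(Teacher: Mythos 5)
Your proposal is correct and follows essentially the same route as the paper: specialize Theorem~\ref{high}, note that $sp\{L_1,\cdots,L_m\}$ collapses to $\{kL\,|\,k\in\mathbb{R}\}$, use scaling invariance of controllability to pass between $(kL,B)$ and $(L,B)$, and observe that leader-follower connectedness is subsumed by the PBH condition on the zero-eigenvalue left eigenvectors of $L$. Your treatment is merely a bit more explicit than the paper's (e.g., the $c=0$ case and the eigenvalue-$0$ argument), but there is no substantive difference.
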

\begin{proof}
For the trivial case of $L_1=L_2=\cdots=L_m\triangleq L$, one obtains $sp \{L_1,L_2,\cdots,L_m\}=\{kL|k\in\mathbb{R}\}$. On one hand, if $(L,B)$ is controllable, the left eigenvectors of $L$ are not orthogonal to $B$. Since $L\in\{kL|k\in\mathbb{R}\}$, it can be declared that network (\ref{highcompact}) is controllable according to Theorem \ref{high}. On the other hand, if the network is controllable, there exists a $k\neq0$ such that when $k_1=k_2=\cdots=k_m=k$, $(k(1+\lambda+\cdots+\lambda^{m-1})L,B)$ is controllable, i.e., $(L,B)$ is controllable.
\end{proof}

Theorem \ref{high} implies that the network can be controllable even if none of the interconnection topologies of each order information is controllable. This is because the information in different orders may uniformly contribute to achieving controllability. Corollary \ref{highcoro} is a main result in \cite{Wang09}. However, Theorem \ref{high} generalises it to the heterogeneous-topology situation. To verify the controllability of Network (\ref{highcompact}), we provide a method via the union graph.
\begin{proposition}
1. Network (\ref{highcompact}) is controllable if the union graph of $\mathbb{G}_1,\cdots,\mathbb{G}_m$ is controllable and $\mathbb{G}_1$ is leader-follower connected.\\
2. Network (\ref{highcompact}) is uncontrollable if the union graph of $\mathbb{G}_1,\cdots,\mathbb{G}_m$ is not leader-follower connected, or $\mathbb{G}_1$ is not leader-follower connected.
\end{proposition}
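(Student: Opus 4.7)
The plan is to reduce both assertions directly to Theorem~\ref{high}, whose necessary and sufficient condition factors into (i) existence of a controllable $\tilde L\in sp\{L_1,\ldots,L_m\}$, and (ii) leader-follower connectedness of $\mathbb{G}_1$. The bridge to the union graph is the elementary identity $L(\mathbb{G}_1\cup\cdots\cup\mathbb{G}_m)=L_1+\cdots+L_m$, which follows at once from the definition of the union in Section~II together with the fact that the degree matrix of the union is the sum of the degree matrices.

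For Part~1, I would take $\tilde L:=L_1+\cdots+L_m\in sp\{L_1,\ldots,L_m\}$. By the identity above, $\tilde L$ is precisely the Laplacian of the union graph, so the hypothesis that the union graph is controllable is exactly the statement that $(\tilde L,B)$ is controllable. Condition~(ii) is supplied by the assumption that $\mathbb{G}_1$ is leader-follower connected, so Theorem~\ref{high} immediately yields controllability of Network~(\ref{highcompact}).

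For Part~2, if $\mathbb{G}_1$ is not leader-follower connected, Theorem~\ref{high} directly denies controllability. Otherwise, I assume the union graph is not leader-follower connected and pick a connected component $C$ of it that contains no leader. Since each $\mathbb{G}_l$ is a subgraph of the union, no $\mathbb{G}_l$ has an edge between $C$ and $V\setminus C$; hence each $L_l$ is block-diagonal with respect to the partition $\{C,V\setminus C\}$, and the indicator vector $1_C$ (with $1_C(i)=1$ iff $v_i\in C$) satisfies $L_l 1_C=0$ for every $l$. By linearity and by the symmetry of each $L_l$, it follows that $1_C^T\tilde L=0$ for every $\tilde L\in sp\{L_1,\ldots,L_m\}$. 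Because $C$ contains no leader, $1_C^T B=0$, so the PBH test at eigenvalue~$0$ fails for every $\tilde L$, violating condition~(i). Theorem~\ref{high} then declares Network~(\ref{highcompact}) uncontrollable.

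The step requiring the most care---and the main potential obstacle---is justifying that $1_C$ lies in the kernel of \emph{each} $L_l$ individually, not merely in the kernel of the sum. This relies on the absence of cross-edges between $C$ and $V\setminus C$ in the union graph being inherited by every constituent graph, which in turn uses the non-negativity of the weights $a_{ij}^{(l)}$ assumed in Section~II; were the weights permitted to take signs and cancel across layers, the inheritance would fail and the common-null-vector argument would no longer go through.
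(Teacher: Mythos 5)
Your proof is correct and follows essentially the same route as the paper: both parts reduce to Theorem~\ref{high}, with $\tilde L=L_1+\cdots+L_m$ (the Laplacian of the union graph) serving as the controllable witness in Part~1. Your Part~2 is in fact slightly more careful than the paper's, which simply asserts that ``the corresponding graph of $k_1L_1+\cdots+k_mL_m$ is not leader-follower connected'' (loose, since for arbitrary real $k_i$ that combination need not be a graph Laplacian), whereas your common left null vector $1_C$ with $1_C^TB=0$ exhibits the PBH failure explicitly for every element of $sp\{L_1,\cdots,L_m\}$.
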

\begin{proof}
1. If the union graph of $\mathbb{G}_1,\cdots,\mathbb{G}_m$ is controllable, it means $(\tilde{L},B)$ is controllable, where $\tilde{L}=L_1+\cdots+L_m$. Since $\mathbb{G}_1$ is leader-follower connected, by Theorem \ref{high}, Network (\ref{highcompact}) is controllable.\\
2. If the union graph of $\mathbb{G}_1,\cdots,\mathbb{G}_m$ is not leader-follower connected, for all $k_1,\cdots,k_m\in\mathbb{R}$, the corresponding graph of $\tilde{L}=k_1L_1+\cdots+k_mL_m$ is not leader-follower connected, i.e., $(\tilde{L},B)$ is uncontrollable, which makes Network (\ref{highcompact}) uncontrollable. If $\mathbb{G}_1$ is not leader-follower connected, by Theorem \ref{high}, Network (\ref{highcompact}) is not controllable.
\end{proof}

\subsection{Controllability of Heterogeneous-Dynamic MASs}

The heterogeneous-dynamic MANs are modeled as
\begin{equation*}
  \dot{x}_i=A_ix_i+b_iu_i,~i=1,2,\cdots,n,
\end{equation*}
where $x_i\in\mathbb{R}^{m_i},A_i\in\mathbb{R}^{m_i\times m_i},b_i\in\mathbb{R}^{m_i}$. For simplicity, suppose that $(A_i,b_i),~i=1,2,\cdots,n$ are controllable pairs (the uncontrollable situation will be discussed later), i.e., there exist invertible matrices $T_1,\cdots,T_n$ such that

\begin{equation}\label{hetermodel}
T_{i}\left( {\begin{array}{*{20}c}
   {\dot x_{i1} }  \\
   {\dot x_{i2} }  \\
    \vdots   \\
   {\dot x_{im_i} }  \\
\end{array}} \right) = \left( {\begin{array}{*{20}c}
   0_{m-1} & {I_{m_{i} - 1} }  \\
   \alpha_{i1} & \tilde{\alpha}_i  \\
\end{array}} \right)T_{i}\left( {\begin{array}{*{20}c}
   {x_{i1} }  \\
   {x_{i2} }  \\
    \vdots   \\
   {x_{im_i} }  \\
\end{array}} \right) + \left( {\begin{array}{*{20}c}
   0_{m-1}  \\
   1  \\
\end{array}} \right)u_{i},
\end{equation}
where $ \left( {\begin{array}{*{20}c}
   0_{m-1} & {I_{m_{i} - 1} }  \\
   \alpha_{i1} & \tilde{\alpha_i}  \\
\end{array}} \right)=T_iA_iT_i^{-1}$ and $ \left( {\begin{array}{*{20}c}
   0_{m-1}  \\
   1  \\
\end{array}} \right)=T_ib_i$. As one can see, when $i\neq j$, $x_i$ and $x_j$ may have different dimensions. To ensure successful interaction between agents, different feedback gains are required for each agent to unify the dimensions of information. Thus, the distributed protocol is designed as $u_i= -\alpha_i^Tx_i  +  \sum\limits_{j \in N_i } { a_{ij} (\beta_j^T T_jx_j  - \beta_i^T T_ix_i )}   + u_{oi}$, where $\alpha_i=(\alpha_{i1},\tilde{\alpha}_{i}^T)^T,\beta_i\in\mathbb{R}^{m_i}$, $i=1,2,\cdots,n$.

The compact form of Network (\ref{hetermodel}) under the protocol is
\begin{equation}\label{hetercompact}
\tilde x = (\Omega-\tilde{L})\tilde x + (e_{\rho_1},\cdots,e_{\rho_r} )u_o,
\end{equation}
where $\rho_1=\sum\limits_{i=1}^{i_1}m_i,\cdots,\rho_r=\sum\limits_{i=1}^{i_r}m_i$, $\Omega=(\omega_1^{T},\omega_2^T,\cdots,\omega_n^T)^T,\tilde{L}=(\tilde{L}_1^{T},\tilde{L}_2^T,\cdots,\tilde{L}_n^T)^T,~\omega_i= e_i^T\otimes$ \\ ${\left( {\begin{array}{*{20}c}
    0_{m-1} & {I_{m_i - 1} }  \\
    0 & 0_{m-1}^T  \\
 \end{array}} \right)},~\tilde{L}_i=e_{m_i}\otimes (l_{i1},\cdots,l_{in})\otimes \beta_i^T,~i=1,2,\cdots,n$.

Network (\ref{hetercompact}) is said to be controllable if for any initial state $x(t_0)=x_0$ and target state $x^*$, there exist a finite time $T>t_0$, a control input $u_o$, and $\beta_i\in\mathbb{R}^{m_i},~i=1,2,\cdots,n$ such that $x(T)=x^*$. To investigate controllability of network (\ref{hetercompact}), the simplest single-leader case is firstly considered. The Laplacian matrix of the interconnection topology is denoted to be $L$ in the following.

\begin{lemma}\label{singleK}
For an MAN, if there is only one leader (i.e., $r=1$) and the interconnection topology is connected, then for any selection of the single leader, there exists a $K=\diag(k_1,\cdots,k_n),~k_i\in\mathbb{R},~i=1,2,\cdots,n$ such that $(LK,B)$ is controllable.
\end{lemma}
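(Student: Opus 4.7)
The plan is to combine the Kalman rank criterion with a polynomial-genericity argument.

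First, I would rephrase controllability in algebraic terms. By the Kalman rank condition, $(LK,e_\ell)$ is controllable if and only if the controllability matrix
\[
  \mathcal{C}(K):=\bigl[\,e_\ell,\; LK\,e_\ell,\; (LK)^{2}e_\ell,\; \ldots,\; (LK)^{n-1}e_\ell\,\bigr]
\]
has rank $n$, i.e.\ $\det\mathcal{C}(K)\neq 0$. Since each entry of $(LK)^{j}$ is a polynomial of degree $j$ in the variables $k_1,\ldots,k_n$, the quantity $p(k_1,\ldots,k_n):=\det\mathcal{C}(K)$ is a multivariate polynomial. It therefore suffices to show that $p$ is not identically zero: once this is known, its zero set is a proper algebraic subvariety of $\mathbb{R}^n$, has Lebesgue measure zero, and almost every $K$ yields a controllable pair.

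Second, to establish $p\not\equiv 0$ I would exploit the hypothesis that the interconnection topology is connected. Root a spanning tree at the leader and relabel the vertices via a BFS order $\ell=v_0,v_1,\ldots,v_{n-1}$, so that each $v_i$ ($i\geq 1$) is adjacent in the tree to some $v_{j(i)}$ with $j(i)<i$. Set $k_{v_i}=t^{\,i}$ for a large parameter $t>0$ and analyze the Krylov vectors $c_j:=(LK)^{j}e_\ell$ asymptotically as $t\to\infty$. The central claim, proved by induction on $j$, is that $c_j$ acquires at leading order in $t$ a nonzero component along $e_{v_j}$ whose power of $t$ strictly dominates the leading powers appearing in any $c_{j'}$ with $j'<j$. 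After permuting rows according to the labeling, $\mathcal{C}(K)$ becomes upper-triangular to leading order in $t$ with a nonvanishing diagonal made up of tree-edge-weight products and powers of $t$, so $\det\mathcal{C}(K)$ is a nontrivial polynomial in $t$.

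Finally, the standard genericity conclusion produces a diagonal $K$ (in fact almost any $K$ arising from large enough $t$ along this scaling) for which $(LK,e_\ell)$ is controllable, proving the lemma. Note that the leader being the unique input vertex makes $\lambda=0$ automatic: $\xi^{T}LK=0$ with $K$ invertible forces $\xi\in\mathrm{span}\{\mathbf{1}\}$, which has nonzero $\ell$-th entry.

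The main obstacle is the leading-order analysis of step two: the coefficient of the highest power of $t$ in a given coordinate of $c_j$ is a sum over graph walks of length $j$, and one has to verify that the dominant walk, along the unique spanning-tree path $\ell\to v_1\to\cdots\to v_j$, contributes a term that is not cancelled by competing walks carrying the same $t$-weight. Careful bookkeeping via the tree structure (in particular, that any walk contributing the dominant $t$-power is forced to traverse the tree path once and then oscillate on already-visited vertices, whose coordinates are of strictly lower order in $t$) should make this step go through.
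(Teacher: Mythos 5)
Your overall reduction is fine: by the Kalman criterion it suffices to show that $p(k_1,\ldots,k_n)=\det\bigl[e_\ell,\;LKe_\ell,\;\ldots,\;(LK)^{n-1}e_\ell\bigr]$ is not the zero polynomial, after which almost every diagonal $K$ works (this genericity conclusion is also what the paper obtains, though it argues via the PBH test after orthogonally diagonalizing $L$, choosing the $k_i$ so that the products $\lambda_p k_i$ are pairwise distinct and using $(\xi_i,\ldots,\xi_{i+s})^T 1_n=0$ to handle repeated Laplacian eigenvalues). The gap is in your witness construction, i.e., precisely the step you yourself flag as ``the main obstacle.'' The inductive claim --- that with $k_{v_i}=t^i$ along a BFS tree the Krylov vector $c_j$ picks up a component on $e_{v_j}$ whose $t$-power strictly dominates everything in earlier columns, so that $\mathcal{C}(K)$ is upper-triangular to leading order --- is false already for the star graph with the leader at the center, which is exactly the symmetric situation the lemma must handle. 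There $c_1=k_{v_0}Le_{v_0}=(3,-1,-1,-1)^T$: all leaves are hit simultaneously with entries of the same order $t^0$, so no single row dominates in column $1$; and in column $2$ the entry in row $v_3$ is $-3-t^3$, of strictly higher degree than the ``diagonal'' entry $-3-t^2$ in row $v_2$, so the matrix is not even approximately triangular. (The determinant still turns out to be $\sim -t^8\neq0$ in this example, but that is established by a global argument --- the identity permutation uniquely maximizes the total $t$-degree in the Leibniz expansion and its leading coefficient does not cancel --- not by the column-by-column domination you describe.)

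Consequently, your step two would have to be replaced by a genuinely different bookkeeping: for a general connected weighted graph you must show that some permutation's total $t$-degree strictly exceeds all others and that the walks realizing it do not cancel (walk weights carry signs coming from the off-diagonal entries of $L$), and your heuristic that dominant walks ``traverse the tree path once and then oscillate on already-visited vertices'' does not survive the star example, where the dominant contributions come from self-loop terms $k_{v_j}L_{v_jv_j}$ at high-index vertices rather than from the tree path. Until that combinatorial nonvanishing statement is actually proved, the existence of a single witness $K$ --- the entire content of the lemma --- is not established. A cleaner way to close the gap along the spirit of the paper is to work with the PBH condition: after writing $L=T\Lambda T^T$ with $T$ orthogonal, one only needs the rows of $\mu T^T-\bar\Lambda T^TK$ associated with each eigenvalue to stay independent for every $\mu$, which can be forced by choosing the $k_i$ so that the finitely many degeneracy conditions (equalities among the $\lambda_pk_i$ and rank drops of fixed minors) are avoided; the zero eigenvalue is handled exactly by your closing remark that $\xi^TL=0$ forces $\xi\in\mathrm{span}\{1_n\}$, whose $\ell$-th entry is nonzero.
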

\begin{proof}
According to the PBH Test, $(LK,B)$ is controllable if and only if for any $\mu\in\mathbb{C}$, $rank(\mu I-LK,B)=n$. Since the interconnection topology is connected, the Laplacian matrix of the topology contains only a single $0$ eigenvalue. Suppose that $LT=T\Lambda=T{\left( {\begin{array}{*{20}c}
    0 & 0_{m-1}^T   \\
    0_{m-1} & \bar{\Lambda}  \\
\end{array}} \right)}$, where $\bar{\Lambda}=diag\{\lambda_2,\cdots,$ $\lambda_n\}$, $T=(\xi_1,\cdots,\xi_n)$. Denote $0=\lambda_1$, obviously, $\lambda_2,\cdots,\lambda_n$ are all positive real numbers, and $\xi_1,\cdots,\xi_n$ are the (left) eigenvectors corresponding to $0=\lambda_1<\lambda_2\leq\lambda_3\leq\cdots\leq\lambda_n$ satisfying $\xi_i^T\xi_i=1$ and $\xi_i^T\xi_j=0$ if $i\neq j$, which makes $\xi_1=\frac{1}{\sqrt{n}}1_n$. Considering that $(\mu I-LK,B)=(\mu TT^{-1}- T\Lambda T^{-1}K,TT^{-1}B)=T(\mu T^{-1}- \Lambda T^{-1}K,T^{-1}B)$, one obtains $rank(\mu I-LK,B)=rank(\mu T^{-1}- \Lambda T^{-1}K,T^{-1}B)$, where $T^{-1}=T^T$. \\
Denote $Q=(\mu T^{-1}- \Lambda T^{-1}K,T^{-1}B)$, since $B$ contains only one column, it follows that $q_{ij}=\xi_{ij}(\mu-\lambda_ik_j)$ if $j\leq n$ and $q_{i,n+1}=\xi_{il}$ (here $\xi_{ij}$ represents the $j$-th element of $\xi_i$). Next we prove that there exist $k_1,k_2,\cdots,k_n$ such that for any $\mu\in\mathbb{C}$, $Q$ is of full row rank. Denote $\bar{Q}=(\mu(\xi_2,\cdots,\xi_n)^T-\bar{\Lambda}(\xi_2,\cdots,\xi_n)^TK)$, if there exist $i,\cdots,i+s$ such that $\lambda_{i}=\cdots=\lambda_{i+s}=\tilde{\lambda}$, then, for any selection of $k_1\neq k_2\neq\cdots\neq k_n\neq0$, one concludes that the $i$-th, $\cdots,$ $(i+s)$-th rows of $\bar{Q}$ are linearly independent for all $\mu$. Otherwise, obtain some $(s+1)$ linearly independent columns of $(\xi_i,\cdots,\xi_{i+s})^T$, denoted as $M=(\eta_1,\cdots,\eta_{s+1})$, and $M=(m_{hj})_{(s+1)\times(s+1)}$ is invertible. Consider $M_\mu=(m_{hj}(\mu-\tilde{\lambda}k_{i+j-1}))_{(s+1)\times(s+1)}$, $M_\mu$ is not of full rank if and only if $\mu=\tilde{\lambda}k_{i+j-1}$ for some $1\leq j\leq s+1$. However, when $\mu=\tilde{\lambda}k_{i+j-1}$, consider the other columns of $(\xi_i,\cdots,\xi_{i+s})^T$, if each of them is linearly dependent with the columns of $M_{\mu}$, since $(\xi_i,\cdots,\xi_{i+s})^T1_n=0$, the $(j-1)$-th column is also linearly dependent with the columns of $M_{\mu}$, which makes a contradiction. Therefore, for any $\mu=\tilde{\lambda}k_{i+j-1}$, there exists another column (denoted to be the $i_0$-th column) in $(\xi_i,\cdots,\xi_{i+s})^T$ such that when the $(j-1)$-th column of $M$ is replaced by this column, $M$ is also invertible. Selecting $k_{i_0}\neq k_{i+j-1}$ makes $\bar{Q}$ of full row rank. This implies that for any selection of $k_1\neq k_2\neq\cdots\neq k_n\neq0$, the $i$-th, $\cdots,$ $(i+s)$-th rows of $\bar{Q}$ are linearly independent for all $\mu$. \\
Similarly, it is straightforward to prove that for different eigenvalues $\lambda_{i_1},\cdots,\lambda_{i_s}$, for any selection of $k_1, k_2,\cdots, k_n\neq0$ satisfying $\lambda_p k_i\neq\lambda_q k_j,~i,j=1,2,\cdots,n,~i\neq j,~p,q\in\{i_1,\cdots,i_s\}$, the $i_1,\cdots,i_s$-th rows of $\bar{Q}$ are linearly independent for all $\mu$. Furthermore, one can prove that for any $\Lambda$, almost all selections of $k_1, k_2,\cdots, k_n\neq0$
make $\bar{Q}$ be always of full row rank for all $\mu$. Since $\xi_1^TB\neq0$, one concludes that there exists a $K=\diag(k_1,\cdots,k_n)$ such that $rank(\mu I-LK,B)=n$ holds for all $\mu\in\mathbb{C}$, i.e., $(LK,B)$ is controllable.
\end{proof}

\begin{example}
Consider an interconnection topology depicted in Figure \ref{lemma}, if agent $1$ is the single leader, $(L,e_1)$ is obviously uncontrollable. However, let $K=\diag(1,1,2,1)$, one can see that $(LK,e_1)$ is controllable.
\end{example}
\begin{figure}
  \centering
  \includegraphics[width=1.6in]{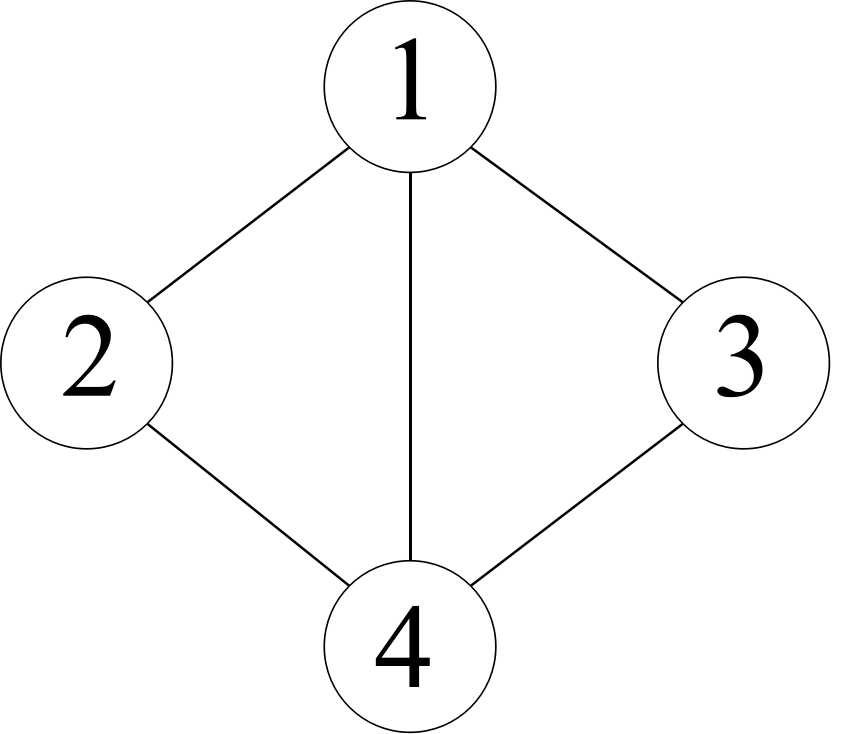}\\
  \caption{A graph with 4 nodes.}\label{lemma}
\end{figure}

\begin{corollary}
If the interconnection topology of an MAN is connected, then almost all selections of $K=\diag(k_1,\cdots,k_n)$ make $(LK,B)$ be controllable for any leader(s).
\end{corollary}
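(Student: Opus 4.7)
The plan is to mimic the argument used in the proof of Lemma \ref{singleK}, modifying it only in the single step where $B$ was assumed to have one column. First, I would invoke the PBH test: $(LK,B)$ is controllable if and only if $\operatorname{rank}(\mu I - LK,\, B) = n$ for every $\mu\in\mathbb{C}$. Since the interconnection graph is undirected and connected, the Laplacian $L$ is symmetric with an orthogonal eigendecomposition $L=T\Lambda T^T$, where $\Lambda=\diag(0,\lambda_2,\ldots,\lambda_n)$, $0<\lambda_2\le\cdots\le\lambda_n$, and the first column of $T$ is $\xi_1=\frac{1}{\sqrt{n}}1_n$. Left-multiplying by $T^T$ (invertible) reduces the condition to $\operatorname{rank}(\mu T^T - \Lambda T^T K,\, T^T B) = n$ for all $\mu$.

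Second, I would check the only place where $B$ entered the proof of Lemma \ref{singleK}, namely the inequality $\xi_1^T B \neq 0$. In the present multi-leader setting $B=(e_{\rho_1},\ldots,e_{\rho_r})$ is still built from standard basis columns, so $\xi_1^T B = \frac{1}{\sqrt{n}}(1,\ldots,1)\in\mathbb{R}^{1\times r}$ is again nonzero. Every other part of the Lemma's argument depends on $B$ only through this single fact, so the whole case analysis (partitioning $\{2,\ldots,n\}$ by eigenvalue coincidence, exhibiting invertible submatrices $M_\mu$ among the rows $(\xi_i)_{i\ge 2}^T$, and using $\xi_i^T 1_n = 0$ for $i\ge 2$ to swap in an alternative column whenever $\det M_\mu$ vanishes) can be transported verbatim. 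This produces, for each $\mu\neq 0$, a finite union of real-algebraic conditions on $(k_1,\ldots,k_n)$ outside of which rows $2,\ldots,n$ of $\mu T^T - \Lambda T^T K$ are linearly independent; the nonzero block $\xi_1^T B$ then guarantees row $1$ is also independent at $\mu=0$, and at $\mu\neq 0$ row $1$ is linearly independent automatically by the same Vandermonde-type argument extended to include index $1$.

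Third, I would conclude by observing that only finitely many algebraic conditions on $(k_1,\ldots,k_n)\in\mathbb{R}^n$ are excluded (one per critical value of $\mu$ produced by the case analysis, and these critical values form a finite set parametrised by $\lambda_2,\ldots,\lambda_n$ and the $k_j$ themselves), so the exceptional set is contained in a finite union of real-algebraic hypersurfaces and thus has Lebesgue measure zero. Since the construction was carried out for an arbitrary choice of leader set, the measure-zero exception is uniform in the leader choice.

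The main obstacle is purely bookkeeping, not a new idea: one must confirm that none of the sub-claims in Lemma \ref{singleK}'s proof (in particular the argument that replaces a bad column of $M$ by another column of $(\xi_i,\ldots,\xi_{i+s})^T$ using $\xi_j^T 1_n=0$) ever appeals to $B$ having exactly one column. Since those sub-claims are statements about the rows of $\Lambda T^T K$ alone, enlarging $B$ from a single identity column to several does not affect them, and the corollary follows.
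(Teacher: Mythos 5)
Your proposal is correct, but it takes a different route from the paper. The paper's own proof of this corollary never re-opens the internals of Lemma \ref{singleK} for a multi-column $B$: it uses the proof of that lemma only in the single-leader case (where it already yields ``almost all $K$''), then observes that there are finitely many choices of the single leader, so the union of the corresponding null sets of bad $K$ is still null, and finally uses the monotonicity fact that appending columns to $B$ (adding leaders) can never destroy controllability, so a $K$ that works for every single-leader choice works for every leader set. You instead prove a multi-leader version of the lemma directly, by checking that $B$ enters the lemma's argument only through $\xi_1^TB\neq 0$ (still nonzero when $B=(e_{\rho_1},\cdots,e_{\rho_r})$) and that the entire case analysis concerns the rows of $\mu T^T-\Lambda T^TK$ alone, so the exceptional set of $K$ is a finite union of algebraic hypersurfaces independent of the leader set. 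What each approach buys: the paper's reduction is shorter and needs no verification that the lemma's bookkeeping is insensitive to the number of input columns; your direct argument dispenses with the monotonicity step and immediately exhibits a single null set uniform over all leader sets (though even without uniformity, a union over the finitely many leader sets would suffice). Both versions rest on the same genericity claim inside the proof of Lemma \ref{singleK} (that almost all $K$ keep the reduced matrix of full row rank for all $\mu$), which the paper itself only sketches, so your proposal is at the same level of rigor as the original.
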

\begin{proof}
According to the proof of Lemma \ref{singleK}, if the interconnection topology is connected, for any single leader, almost all selections of $K$ could make $(LK,B)$ be controllable. Since the number of leaders is finite, it can be also obtained that almost all selections of $K$ could make $(LK,B)$ be controllable for any selection of a single leader. Since adding more leaders will never break the controllability, one concludes that almost all selections of $K=\diag(k_1,\cdots,k_n)$ guarantee that for any selection of leaders, $(LK,B)$ is controllable.
\end{proof}

\begin{lemma}\label{multipleK}
If the interconnection topology of an MAN is leader-follower connected, then there exists a $K=diag(k_1,\cdots,k_n)$ such that $(LK,B)$ is controllable.
\end{lemma}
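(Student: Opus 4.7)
The plan is to reduce the leader-follower connected case to a disjoint union of connected cases, each of which is handled by the corollary to Lemma~\ref{singleK}. First, I would decompose the interconnection graph $\mathbb{G}$ into its connected components $\mathbb{G}^{1},\ldots,\mathbb{G}^{c}$ of sizes $n_{1},\ldots,n_{c}$. The leader-follower connected hypothesis guarantees that each $\mathbb{G}^{i}$ carries at least one leader. By relabeling the agents component by component, the Laplacian assumes the block-diagonal form $L=\diag(L^{1},\ldots,L^{c})$ where $L^{i}$ is the Laplacian of $\mathbb{G}^{i}$. Because every leader's input column has support on only its own component, the input matrix also becomes block-diagonal, $B=\diag(B^{1},\ldots,B^{c})$, with $B^{i}$ listing the leader positions inside $\mathbb{G}^{i}$.

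Next I would apply the corollary to Lemma~\ref{singleK} on each component individually: since $\mathbb{G}^{i}$ is connected and $B^{i}$ selects at least one leader, almost every choice of $K^{i}=\diag(k^{i}_{1},\ldots,k^{i}_{n_{i}})$ makes $(L^{i}K^{i},B^{i})$ controllable. Pick one such $K^{i}$ for each $i$ and assemble $K=\diag(K^{1},\ldots,K^{c})$. Because $L$ and $K$ share the same block partition, the product is still block-diagonal: $LK=\diag(L^{1}K^{1},\ldots,L^{c}K^{c})$.

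To finish, I would invoke the standard observation that a block-diagonal pair with block-diagonal input is controllable iff each block is. Concretely, $(LK)^{k}B=\diag((L^{1}K^{1})^{k}B^{1},\ldots,(L^{c}K^{c})^{k}B^{c})$, so after a column permutation the controllability matrix $[B,\,LK\,B,\,(LK)^{2}B,\ldots,(LK)^{n-1}B]$ is block-diagonal with $i$-th block equal to the controllability matrix of $(L^{i}K^{i},B^{i})$; each such block has full row rank $n_{i}$ by the previous step, and summing yields total rank $n=\sum_{i}n_{i}$. Hence $(LK,B)$ is controllable.

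The only point demanding genuine care is confirming that the block-diagonal structure of $B$ really tracks the component decomposition, since this is what the leader-follower connected assumption actually delivers: a leaderless component would produce an identically zero diagonal block in $B$, and no diagonal $K$ could possibly rescue controllability there. No dimensional mismatch arises because each $K^{i}$ is diagonal of size $n_{i}\times n_{i}$ matching the block of $L^{i}$, so the assembly $K=\diag(K^{1},\ldots,K^{c})$ is a legitimate $n\times n$ diagonal matrix as required by the statement.
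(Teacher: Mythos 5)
Your proposal is correct and follows essentially the same route as the paper: decompose the graph into connected components so that $L$, $B$, and $K$ are simultaneously block-diagonal, apply Lemma~\ref{singleK} (via its corollary) to each connected component containing a leader, and conclude controllability of $(LK,B)$ from controllability of each block $(L^iK^i,B^i)$. Your explicit Kalman-rank justification of the block-diagonal reduction is a detail the paper merely asserts, but the argument is the same.
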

\begin{proof}
If the interconnection topology is connected, this is exactly Lemma \ref{singleK}. If the interconnection topology contains $r>1$ connected components, then one can properly reorder the identifiers of the agents such that $L=\diag(L_1,\cdots,L_r),~B=\diag(B_1,\cdots,B_r)$. Refer to the blocks of $L$, divide $K$ into blocks $K=diag(K_1,\cdots,K_r)$, one obtains that $(LK,B)$ is controllable if and only if $(\diag(L_1K_1,\cdots,L_rK_r),\diag(B_1,\cdots,$ $B_r))$ is controllable, which holds if and only if $(L_iK_i,B_i),i=1,2,\cdots,r$ are controllable. However, according to Lemma \ref{singleK}, since $L_i$ corresponds to a connected subgraph and $B_i\neq0$, there exists a $K_i$ such that $(L_iK_i,B_i)$ is controllable, $i=1,2,\cdots,r$. Selecting $K_i$ making $(L_iK_i,B_i)$ be controllable ensures that $(LK,B)$ is controllable.
\end{proof}

\begin{lemma}\label{heterlemma}
If $(A_i,b_i),~i=1,\cdots,n$ are controllable, then Network (\ref{hetercompact}) is controllable if and only if the interconnection topology is leader-follower connected.
\end{lemma}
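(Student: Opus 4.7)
The plan is to obtain necessity by a standard autonomous-subsystem argument and sufficiency by choosing the coupling gains $\beta_i$ so that the PBH test for Network~(\ref{hetercompact}) reduces to the statement of Lemma~\ref{multipleK}. For necessity, if $\mathbb{G}$ fails to be leader-follower connected, pick a connected component $C$ containing no leader; since the information in~(\ref{hetercompact}) flows only along edges of $\mathbb{G}$, the rows and columns of $\Omega-\tilde L$ indexed by $C$ form a closed block into which $u_o$ does not enter, so the trajectory restricted to $C$ satisfies an autonomous ODE and no target with nonzero support on $C$ is reachable, whatever the $\beta_i$ are. For sufficiency, assume $\mathbb{G}$ is leader-follower connected, invoke Lemma~\ref{multipleK} to pick $K=\diag(k_1,\dots,k_n)$ with every $k_i\neq 0$ such that $(LK,B)$ is controllable (where $L=L^T$ is the graph Laplacian and $B=(e_{i_1},\dots,e_{i_r})$ selects the leader coordinates), and propose the coupling gains $\beta_i=k_i e_1^{(m_i)}$. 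Under this choice the $(i,j)$ block of $\tilde L$ equals $k_j l_{ij}\,e_{m_i}(e_1^{(m_j)})^{T}$, so the coupling enters only the last row of block $i$ and reads only the first coordinate of block $j$.

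I would then apply the PBH test to $(\Omega-\tilde L,\,B_{\mathrm{net}})$ with $B_{\mathrm{net}}=(e_{\rho_1},\dots,e_{\rho_r})$. Writing a left eigenvector as $\xi^T=(\xi_1^T,\dots,\xi_n^T)$ with $\xi_i\in\mathbb{R}^{m_i}$ and setting $\sigma_j=\xi_{j,m_j}$, the companion-shift form of $\omega_j$ together with the $e_{m_i}(e_1^{(m_j)})^T$ sparsity of $\tilde L$ allow me to unwind the equation $\xi^T(\Omega-\tilde L)=\mu\xi^T$ component by component inside each block. For $\mu\neq 0$ every entry $\xi_{j,k}$ is pinned to $\sigma_j\mu^{m_j-k}$ and the surviving first-row equations collapse to $(KL\sigma)_j=-\mu^{m_j}\sigma_j$ for all $j$; for $\mu=0$ every entry of $\xi_j$ other than $\xi_{j,m_j}$ is forced to vanish and the first-row equations collapse instead to $L\sigma=0$. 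Since $\xi^TB_{\mathrm{net}}=(\sigma_{i_1},\dots,\sigma_{i_r})$, the PBH condition is exactly the statement that $\sigma$ has a nonzero entry at some leader index, so controllability of~(\ref{hetercompact}) reduces to ruling out, for every $\mu\in\mathbb{C}$, a nonzero $\sigma$ with vanishing leader entries.

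The case $\mu=0$ is clean: leader-follower connectedness of $\mathbb{G}$ implies $\ker L$ is spanned by the component-indicator vectors, each of which is nonzero on a leader, so every nonzero $\sigma\in\ker L$ has a nonzero leader entry. The main obstacle is the case $\mu\neq 0$, where the per-agent exponents $\mu^{m_j}$ turn $(KL\sigma)_j=-\mu^{m_j}\sigma_j$ into a polynomial matrix pencil rather than an ordinary eigenvalue problem. My plan to finish it is to argue by contradiction: if $\sigma$ vanishes on all leaders, invertibility of $K$ forces the leader rows into $L_{LF}\sigma_F=0$ and the follower rows into $K_F L_{FF}\sigma_F=-\diag_F(\mu^{m_j})\sigma_F$, so $\sigma=(0,\sigma_F)$ satisfies $KL\sigma=-\diag(\mu^{m_j})\sigma$. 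In the homogeneous-order subcase $m_j\equiv m$ this literally exhibits $\sigma$ as a right eigenvector of $KL$ supported on followers, which via $KL=(LK)^T$ is a left eigenvector of $LK$ orthogonal to $B$, directly contradicting Lemma~\ref{multipleK}. For genuinely heterogeneous orders I would invoke the full-measure freedom in $K$ from the proof of Lemma~\ref{multipleK} to exclude the finitely many values of $\mu$ at which $\det(KL+\diag(\mu^{m_j}))=0$ admits a follower-supported solution; this generic-choice step is where the argument demands the most care.
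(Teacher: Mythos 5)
Your route is essentially the paper's own proof: you unwind the companion blocks down to the last-row variables $\sigma$ (the paper's $\eta$), reduce the PBH test to a gain-weighted Laplacian condition, make exactly the paper's choice $\beta_i=k_ie_1^{(m_i)}$ via Lemma~\ref{multipleK}, settle $\mu=0$ by leader-follower connectedness, and get necessity from the leaderless component (the paper phrases this with a left eigenvector supported on that component rather than your autonomous-subsystem argument, but it is the same observation). The one step you explicitly leave open---excluding, for $\mu\neq 0$ and genuinely heterogeneous orders $m_j$, a follower-supported solution of the polynomial pencil $KL\sigma=-\diag(\mu^{m_j})\sigma$---is precisely the step the paper compresses into the bare assertion that controllability of Network~(\ref{hetercompact}) is equivalent to controllability of $(L\,\diag(\rho_1,\dots,\rho_n),B)$ for some $\rho_i$, followed by taking $\rho_i=k_i$; the published proof supplies no more justification at that point than your sketch does, so your proposal is faithful to, and no less complete than, the paper's argument, though a fully rigorous treatment would indeed require carrying out the genericity-in-$K$ argument you indicate.
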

\begin{proof}
Suppose that $\Omega-\tilde{L}$ has an eigenvalue $\lambda$ with the corresponding left eigenvector $\theta^T=[\theta_1^T,\theta_2^T,\cdots,$ $\theta_m^T]$, where $\theta_i=(\theta_{i1},\theta_{i2},\cdots,\theta_{i m_i-1},\theta_{im_i})^T\in \mathbb{R}^{m_i}, i=1,2,\cdots,m$. Specially, denote $\theta_{im_i}=\eta_i$ and $\eta=(\eta_1,\cdots,\eta_n)^T$, one obtains that $\theta^T(\Omega-\tilde{L})=\lambda\theta^T$. Furthermore, $\lambda\theta_{i,m_i-j}=\theta_{i,m_i-j-1}+\eta^T\beta_{i,m_i-j}l_i$ yields
\begin{equation*}
  \left\{ {\begin{array}{*{20}c}
\begin{aligned}
    \theta_{i,m_i-1}&= \lambda \eta_i-\eta^T\beta_{i,m_i}l_i   \\
    \theta_{i,m_i-2}&= \lambda^2 \eta_i - \lambda\eta^T\beta_{i,m_i}l_i^T-\eta^T\beta_{i,m_i-1}l_i^T   \\
    &\vdots   \\
   \theta_{i,1}&= \lambda^{m_i-1} \eta_i - \lambda^{m_i-2}\eta^T\beta_{i,m_i}l_i^T-\cdots-\eta^T\beta_{i,2}l_i^T   \\
   \lambda\theta_{i,1}&= \eta^T \beta _{11}l_1  .
\end{aligned}
\end{array}} \right.
\end{equation*}
Therefore, $\eta^T(\rho_{1}l_1,\rho_{2}l_2,\cdots,\rho_{n}l_n)=\lambda^{m_j}\eta^T,~j=1,2,\cdots,n$, where $\rho_i=\beta_{i1}+\lambda\beta_{i2}+\cdots+\lambda^{m_i-1}\beta_{im_i},~i=1,2,\cdots,n$. This means Network (\ref{hetercompact}) is controllable if and only if there exist $\rho_1,\cdots,\rho_n$ such that $(L\cdot \diag(\rho_1,\cdots,\rho_n),B)$ is controllable. \\
Sufficiency: If the interconnection topology of Network (\ref{hetercompact}) is leader-follower connected, by Lemma \ref{multipleK}, there exists a $K$ such that $(LK,B)$ is controllable. Selecting $\beta_{i1}=k_i$ and $\beta_{ij}=0,~i=1,\cdots,n,~j=2,\cdots,m_i$ makes $(L\cdot \diag(\rho_1,\cdots,\rho_n),B)$ be controllable.\\
Necessity: If there exist $\rho_1,\cdots,\rho_n$ such that $(L\cdot \diag(\rho_1,\cdots,\rho_n),B)$ is controllable, it concludes that $L$ corresponds to a leader-follower connected graph. Otherwise, $L=\diag(L_1,\cdots,L_r)$, $L_i\in\mathbb{R}^{n_i},~i=1,2,\cdots,r,~n_1+\cdots+n_r=n$. Without loss of generality, suppose that the $r$-th connected component of the interconnection topology contains no leader, i.e., the last $r$ rows of $B$ are all $0$. Apparently, for any $K$, $LK$ contains a left eigenvector $\zeta^T=(0_{n_1}^T,\cdots,0_{n_{r-1}}^T,\tilde{\zeta}^T)$, which satisfies $\zeta^T B=0$. This contradicts that there exist $\rho_1,\cdots,\rho_n$ such that $(L\cdot \diag(\rho_1,\cdots,\rho_n),B)$ is controllable.
%
\end{proof}

\begin{theorem}\label{heterthm}
Network (\ref{hetercompact}) is controllable if and only if the following two conditions are satisfied simultaneously:\\
i) Matrix pairs $(A_i,b_i), ~i=1,2,\cdots,n$ are all controllable, respectively;\\
ii) The interconnection topology of the network is leader-follower connected.
\end{theorem}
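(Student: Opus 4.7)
The plan is to reduce the theorem to Lemma \ref{heterlemma} together with one short observation about autonomous agent modes. Sufficiency is immediate: when (i) and (ii) both hold, the canonical-form transformations $T_i$ exist, the compact form (\ref{hetercompact}) is well defined, and Lemma \ref{heterlemma} delivers controllability at once.

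For necessity, I will argue in the original agent coordinates, where the closed loop is $\dot x_i = A_i x_i + b_i u_i$ with the stated protocol and which is related to (\ref{hetercompact}) by the block similarity $\diag(T_1,\dots,T_n)$ whenever the $T_i$ exist. First I show (i) is necessary. Suppose some $(A_{i_0},b_{i_0})$ is uncontrollable, so by PBH there is a left eigenvector $\xi_{i_0}$ of $A_{i_0}$ with eigenvalue $\lambda_{i_0}$ satisfying $\xi_{i_0}^T b_{i_0}=0$. Every contribution to agent $i_0$'s dynamics coming from a neighbor, from the local feedback gains $\alpha_{i_0},\beta_{j}$, or from the external input $u_{o,i_0}$ enters only through the column $b_{i_0}$, so
\begin{equation*}
\xi_{i_0}^T \dot x_{i_0}
= \xi_{i_0}^T A_{i_0} x_{i_0} + \xi_{i_0}^T b_{i_0}\, u_{i_0}
= \lambda_{i_0}\,\xi_{i_0}^T x_{i_0}.
\end{equation*}
Placing $\xi_{i_0}$ in the $i_0$-th agent block and zero elsewhere produces a left eigenvector of the closed-loop system matrix that is orthogonal to every input column $e_{\rho_k}$, so by the PBH Test the network is uncontrollable, a contradiction. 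Hence (i) must hold; the compact form (\ref{hetercompact}) is then well defined, and the ``only if'' direction of Lemma \ref{heterlemma} immediately forces (ii).

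The main obstacle is a mild coordinate subtlety: the compact form (\ref{hetercompact}) was built using the transformations $T_i$ that exist only when each $(A_i,b_i)$ is controllable, so a PBH argument performed directly inside (\ref{hetercompact}) cannot address the necessity of (i). Working in the original $x_i$ coordinates sidesteps this cleanly, since controllability is invariant under the global similarity $\diag(T_1,\dots,T_n)$ whenever it exists, and the autonomous-mode identity above does not depend on any particular choice of $\alpha_j$ or $\beta_j$. Once that coordinate care is taken, the argument is essentially one line, and Lemma \ref{heterlemma} handles the remainder.
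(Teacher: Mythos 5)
Your proposal is correct and follows essentially the same route as the paper: sufficiency and the necessity of condition (ii) are obtained from Lemma \ref{heterlemma}, while the necessity of condition (i) comes from observing that an uncontrollable pair $(A_{i_0},b_{i_0})$ makes the whole network uncontrollable. Your explicit PBH construction of the block left eigenvector (exploiting that all neighbor, feedback, and external signals enter agent $i_0$ only through the column $b_{i_0}$) is simply a more carefully justified version of the paper's one-sentence assertion of that step.
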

\begin{proof}
Sufficiency: Since matrix pairs $(A_i,b_i), ~i=1,2,\cdots,n$ are all controllable, by Lemma \ref{heterlemma}, the interconnection topology being leader-follower connected makes network (\ref{hetercompact}) controllable.\\
Necessity: If $(A_i,b_i)$ is not controllable for some $i$, the state of agent $i$ is uncontrollable, which makes the whole network uncontrollable. Therefore, $(A_i,b_i),~i=1,\cdots,n$ must be controllable. According to Lemma \ref{heterlemma}, since Network (\ref{hetercompact}) is controllable, the interconnection topology must be leader-follower connected.
\end{proof}

Theorem \ref{heterthm} shows that Network (\ref{hetercompact}) is controllable if and only if the agents' dynamics are all controllable and the interconnection topology is leader-follower connected. This means Network (\ref{hetercompact}) can be controllable even if $(L,B)$ is not controllable. Especially, if $(L,B)$ is controllable, one can design the feedback gains as follows.

\begin{corollary}\label{hetercoro}
Suppose that $(A_i,b_i),~i=1,\cdots,n$ are controllable. If $(L,B)$ is controllable, then selecting $\beta_{11}=\beta_{21}=\cdots=\beta_{n1}\neq0$ and $\beta_{ij}=0,~i=1,2,\cdots,n,~j=2,3,\cdots,m_i$ makes Network (\ref{hetercompact}) controllable.
\end{corollary}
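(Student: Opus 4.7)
The plan is to exploit the reformulation obtained inside the proof of Lemma \ref{heterlemma}: controllability of Network (\ref{hetercompact}) is equivalent to the existence of feedback parameters $\beta_{ij}$ making $(L\cdot\diag(\rho_1,\ldots,\rho_n),B)$ controllable, where $\rho_i=\beta_{i1}+\lambda\beta_{i2}+\cdots+\lambda^{m_i-1}\beta_{im_i}$ and $\lambda$ runs over the eigenvalues involved in the PBH analysis. So the strategy is simply to plug in the candidate feedback choice and verify that the resulting reduced pair collapses to (a scalar multiple of) $(L,B)$.

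Concretely, set $c=\beta_{11}=\beta_{21}=\cdots=\beta_{n1}\neq 0$ and $\beta_{ij}=0$ for every $j\ge 2$. The key observation is that under this choice each polynomial $\rho_i$ in $\lambda$ degenerates to the \emph{same} nonzero constant $c$, independently of $\lambda$ and of $i$. Therefore $\diag(\rho_1,\ldots,\rho_n)=cI_n$ and the reduced matrix appearing in Lemma \ref{heterlemma} becomes $L\cdot cI_n=cL$. Controllability of $(cL,B)$ is equivalent to controllability of $(L,B)$: in the PBH test, scaling the first $n$ columns of $(\mu I-cL,B)$ by $1/c$ produces $((\mu/c)I-L,B')$ up to a column rescaling that does not change the rank, so full row rank for all $\mu\in\mathbb{C}$ is preserved. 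Since $(L,B)$ is controllable by hypothesis, so is $(cL,B)$.

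Invoking the equivalence from the proof of Lemma \ref{heterlemma} in the other direction, the controllability of $(cL,B)$ (together with the fact that $(A_i,b_i)$ are controllable, guaranteeing the relevant $\eta$-to-$\theta$ correspondence is valid) implies controllability of Network (\ref{hetercompact}). The only point deserving care is confirming that the specialization $\rho_i\equiv c$ correctly realizes the existential statement in Lemma \ref{heterlemma} across all relevant $\lambda$ — but this is immediate precisely because the chosen $\rho_i$ no longer depend on $\lambda$, so the condition collapses uniformly to controllability of $(cL,B)$. No other step presents a real obstacle; the corollary is essentially a direct specialization of Lemma \ref{heterlemma} to the simplest possible feedback pattern.
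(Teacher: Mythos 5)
Your proposal is correct and follows essentially the same route as the paper: specialize the feedback gains so that every $\rho_i$ collapses to the same nonzero constant, whereupon the reduced pair from Lemma \ref{heterlemma} becomes $(qL,B)$, whose controllability is equivalent to that of $(L,B)$. The paper phrases this via the eigenvector relation $\eta^TL=\frac{\lambda^{m_j}}{q}\eta^T$ rather than your PBH column-scaling remark, but the substance is identical.
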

\begin{proof}
Refer to the proof of Lemma \ref{heterlemma}, if $\beta_{11}=\beta_{21}=\cdots=\beta_{n1}=q\neq0$ and $\beta_{ij}=0,~i=1,2,\cdots,n,~j=2,3,\cdots,m_i$, $\rho_1=\rho_2=\cdots=\rho_n=q$ and it holds that $\eta^TL=\frac{\lambda^{m_j}}{q}\eta^T,~j=1,2,\cdots,n$. This means that when $\beta_{11}=\beta_{21}=\cdots=\beta_{n1}\neq0$, the controllability of Network (\ref{hetercompact}) and $(L,B)$ are equivalent. Therefore, if $(A_i,b_i),~i=1,\cdots,n$ are supposed to be controllable, Network (\ref{hetercompact}) is also controllable under this selection of feedback gains.
\end{proof}

The following corollary has appeared in \cite{Wang09}, which can be directly obtained from Theorem \ref{heterthm} as a special case.
\begin{corollary}\label{kWNF}
For Network (\ref{hetercompact}), when $A_1=A_2=\cdots=A_n\triangleq A$, and $b_1=b_2=\cdots=b_n\triangleq b$, \\
1) The network is controllable if and only if $(A,b)$ is controllable and the interconnection topology is leader-follower connected;\\
2) If it is required that $\beta_1=\cdots=\beta_n$, then the network is controllable if and only if $(A,b)$ is controllable and $(L,B)$ is controllable.
\end{corollary}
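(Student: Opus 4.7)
My plan is to obtain both parts of the corollary by specializing the results already established, with Theorem~\ref{heterthm} handling part 1 essentially for free and the argument inside the proof of Lemma~\ref{heterlemma} handling part 2 with one extra observation.

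For part 1), I would simply invoke Theorem~\ref{heterthm}. Setting $A_i=A$ and $b_i=b$ for every $i$ collapses the condition ``$(A_i,b_i)$ controllable for each $i$'' to the single statement ``$(A,b)$ is controllable,'' while the leader–follower connectedness condition is preserved verbatim. No further work is needed.

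For part 2), the idea is to track what the extra constraint $\beta_1=\cdots=\beta_n$ does inside the reduction in the proof of Lemma~\ref{heterlemma}. There, Network~(\ref{hetercompact}) was shown to be controllable precisely when there exist scalars $\rho_i=\beta_{i1}+\lambda\beta_{i2}+\cdots+\lambda^{m_i-1}\beta_{im_i}$ making $(L\cdot\diag(\rho_1,\ldots,\rho_n),B)$ controllable. Since $A_i=A$ forces $m_i=m$ and the imposed constraint forces all $\beta_i$ to agree, every $\rho_i$ collapses to a common scalar $\rho(\lambda)=\beta_1+\lambda\beta_2+\cdots+\lambda^{m-1}\beta_m$. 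Hence $L\cdot\diag(\rho_1,\ldots,\rho_n)=\rho(\lambda)\,L$, and for any $\lambda$ with $\rho(\lambda)\neq0$ the pair $(\rho(\lambda)L,B)$ is controllable iff $(L,B)$ is, because scaling $L$ by a nonzero constant does not alter its left eigenvectors.

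Sufficiency of part 2 then follows by picking, for instance, $\beta_1\neq 0$ and $\beta_2=\cdots=\beta_m=0$, which gives $\rho\equiv\beta_1\neq 0$; together with $(A,b)$ controllable and $(L,B)$ controllable, the PBH criterion applied to $\Omega-\tilde L$ is immediately verified. For necessity, part 1 already forces $(A,b)$ to be controllable, so I only need to prove $(L,B)$ is controllable under the constraint. If it were not, some left eigenvector $\zeta$ of $L$ with eigenvalue $\mu$ would satisfy $\zeta^TB=0$; the polynomial equation $\lambda^m=\mu\rho(\lambda)$ has at least one complex root $\lambda_0$ for any admissible $\beta$ (with $\rho\not\equiv 0$), and feeding $\zeta$ and $\lambda_0$ back through the recursion from Lemma~\ref{heterlemma} produces a left eigenvector of $\Omega-\tilde L$ orthogonal to the input columns, contradicting controllability. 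The one degenerate case — $\rho\equiv 0$, i.e.\ $\beta=0$ — decouples the agents entirely and obviously destroys controllability whenever followers exist, so it can be dismissed in a single sentence.

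The only subtle point, and thus the main obstacle, is making the necessity argument airtight: one has to confirm that the extended eigenvector produced by the Lemma's recursion is nonzero (which it is, because its last-block coordinate $\eta=\zeta\neq 0$) and that the polynomial $\lambda^m-\mu\rho(\lambda)$ always admits a root — both of which are straightforward but deserve an explicit line in the write-up.
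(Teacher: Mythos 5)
Your proof is correct and follows essentially the same route as the paper: part 1 is the same direct specialization of Theorem~\ref{heterthm}, and part 2 rests on the $\rho$-reduction from the proof of Lemma~\ref{heterlemma}, which is exactly the mechanism behind Corollary~\ref{hetercoro} that the paper cites. If anything, your treatment of necessity in part 2 (exhibiting a root of $\lambda^m \pm \mu\rho(\lambda)$ and lifting the left eigenvector $\zeta$ of $L$ to a nonzero left eigenvector of $\Omega-\tilde{L}$ orthogonal to the inputs, for an arbitrary common $\beta$) is more explicit than the paper's one-line appeal to Corollary~\ref{hetercoro}, which as stated only covers sufficiency under a special choice of $\beta$.
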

\begin{proof}
If $A_1=A_2=\cdots=A_n\triangleq A$, and $b_1=b_2=\cdots=b_n\triangleq b$, then $(A_i, b_i),~i=1,2,\cdots,n$ all being controllable is equivalent to $(A,b)$ being controllable. By Theorem \ref{heterthm}, the network is controllable if and only if $(A,b)$ is controllable and the interconnection topology is leader-follower connected; if $\beta_1=\cdots=\beta_n$, by Corollary \ref{hetercoro}, the network is controllable if and only if $(A,b)$ is controllable and $(L,B)$ is controllable.
\end{proof}

\begin{remark}
Counter-intuitively, for the simple case that the agents are all of the first-order dynamic, leader-follower connection is only a necessary condition, but it is both necessary and sufficient for the general case, i.e., agents of heterogenous generic-linear dynamics. The reason of this difference relies in the feedback gains $\beta_1,\cdots,\beta_n$. Actually, the effect of $\beta_1,\cdots,\beta_n$ is to machining the state information of each agent. Degenerate it to the simple case $\dot x=u$, if one gives each agent an independent feedback gain $k_i$ respectively, the protocol turns to be $u_i=\sum\limits_{j\in N_i} a_{ij}(k_jx_j-k_ix_i)+u_o$, and the compact form is summarized as $\dot x=-LKx+Bu_o$. By Lemma \ref{multipleK}, the network (with first-order dynamic agents) is controllable (for almost all selections of $K$) if and only if the interconnection topology is leader-follower connected.
\end{remark}

To investigate controllability of heterogeneous MANs with uncontrollable nodes, controllability of MANs with inhomogeneous dynamics should be firstly discussed. Compare the two models in Networks (\ref{1234}) and (\ref{hetermodel}), if $m_1=m_2=\cdots=m_n=m$ and $\alpha_1=\alpha_2=\cdots=\alpha_n=0$ in (\ref{hetermodel}), the two models are exactly the same. Mathematically, the model of high-order dynamic agents with heterogeneous topologies is a special case of the model of heterogeneous dynamic agents. Therefore, the inhomogeneous dynamic is modeled for heterogeneous dynamic multi-agent networks. Consider the network
\begin{equation}\label{nonhomo}
\tilde x = (\Omega-\tilde{L})\tilde x + (e_i  \otimes \tilde b_i )u_o +f(t),
\end{equation}
where $f(t)=(f_1(t),\cdots,f_n(t))^T$, $f_i(t)=(f_{i1}(t),\cdots,f_{im_i}(t))^T$, and $f_{ij}(t)$ is piecewise continuous, $i=1,2,\cdots,n,~j=1,2,\cdots,m_i$. We compare its controllability with that of Network (\ref{hetercompact}).

\begin{theorem}\label{nonhomogeneous}
Controllability of Network (\ref{nonhomo}) is equivalent to that of Network (\ref{hetercompact}).
\end{theorem}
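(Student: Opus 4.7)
The plan is to use the variation-of-constants formula and exploit the fact that $f(t)$ is a known forcing term that depends neither on the state $\tilde{x}$ nor on the control $u_o$. Fix any choice of the feedback gains $\beta_1,\dots,\beta_n$ and write $M=\Omega-\tilde{L}$ and $N=(e_i\otimes\tilde{b}_i)$. Then for Network (\ref{nonhomo}) the solution at time $T$ from initial state $\tilde{x}_0$ is
\begin{equation*}
\tilde{x}(T)=e^{MT}\tilde{x}_0+\int_0^{T}e^{M(T-s)}Nu_o(s)\,ds+\int_0^{T}e^{M(T-s)}f(s)\,ds,
\end{equation*}
whereas Network (\ref{hetercompact}) yields the same expression without the final integral. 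Denote $\phi_{\beta}(T)=\int_0^{T}e^{M(T-s)}f(s)\,ds$; since $f$ is piecewise continuous, this integral is well-defined and, once $\beta$ and $T$ are fixed, it is a deterministic vector independent of $u_o$.

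Next I would observe the translation principle: reaching a target $x^\ast$ from $\tilde{x}_0$ at time $T$ in Network (\ref{nonhomo}) under gains $\beta$ and control $u_o$ is equivalent, via the formula above, to reaching $x^\ast-\phi_{\beta}(T)$ at time $T$ from the same $\tilde{x}_0$ in Network (\ref{hetercompact}) under the same gains and the same control. Thus the two reachable sets (for fixed $\beta$ and $T$) differ only by the constant shift $\phi_{\beta}(T)$, and one equals the whole state space if and only if the other does.

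From this the equivalence of controllability follows by two symmetric arguments. If Network (\ref{hetercompact}) is controllable, pick gains $\beta$ and a time $T$ for which the homogeneous trajectory can reach any point from $\tilde{x}_0$. Then given any $x^\ast$ for Network (\ref{nonhomo}), apply the control that drives the homogeneous system from $\tilde{x}_0$ to $x^\ast-\phi_{\beta}(T)$; by the identity above, the inhomogeneous trajectory reaches $x^\ast$. Conversely, if Network (\ref{nonhomo}) is controllable with gains $\beta$, then given any target $y^\ast$ for Network (\ref{hetercompact}), use the control steering the inhomogeneous system to $y^\ast+\phi_{\beta}(T)$; the same computation gives $\tilde{x}(T)=y^\ast$ for the homogeneous system.

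No single step is truly an obstacle, but the point requiring care is that the feedback gains $\beta_1,\dots,\beta_n$ are part of what ``controllability'' quantifies over; I must perform the translation argument with $\beta$ held fixed on both sides so that $\phi_{\beta}(T)$ is a legitimate constant shift. Once $\beta$ is fixed, the equivalence is a standard consequence of linearity, and the fact that the same $\beta$ works for both networks then yields the theorem.
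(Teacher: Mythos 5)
Your proposal is correct and follows essentially the same route as the paper: the paper splits the state as $x=\tilde{x}+\bar{x}$ with $\dot{\tilde{x}}=A\tilde{x}+Bu$ and $\dot{\bar{x}}=A\bar{x}+f(t)$, which is exactly your variation-of-constants shift $\phi_{\beta}(T)$, and concludes that the forcing term only translates the reachable set. Your explicit remark about keeping the gains $\beta_1,\dots,\beta_n$ fixed during the translation is a point the paper leaves implicit, but it does not change the argument.
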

\begin{proof}
Consider the controllability of $\dot{x}=Ax+Bu+f(t), ~A\in\mathbb{R}^{n\times n}, B\in \mathbb{R}^{n\times m}$,  let $x=\tilde{x}+\bar{x}$ such that $\dot{\tilde{x}}=A\tilde{x}+Bu$ and $\dot{\bar{x}}=A\bar{x}+f(t)$. For any $t>t_0$, it holds that $\tilde{x}=e^{A(t-t_0)}\tilde{x}_0+\int_{t_0}^t e^{A(t-\tau)}Bu(\tau)d\tau$ and $\bar{x}=e^{A(t-t_0)}\bar{x}_0+\int_{t_0}^t e^{A(t-\tau)}f(\tau)d\tau$. According to the definition of controllability, the network is controllable if and only if for any $x^*\in\mathbb{R}^n$, there exists an input $u$ such that $\tilde{x}=x^*-\bar{x}$. However, it is equal to that for any $x^{**}\in\mathbb{R}^n$, there exists a control input $u$ such that $\tilde{x}=x^{**}$, i.e., $(A,B)$ is controllable. Therefore, for network (\ref{nonhomo}), it is controllable if and only if $(\Omega-\tilde{L},e_i  \otimes \tilde B_i )$ is controllable, which means $(L,B),(A_1,B_1),\cdots, (A_n,B_n)$ are all controllable. This implies that controllability of Network (\ref{hetercompact}) is equivalent to that of Network (\ref{nonhomo}).
\end{proof}

\begin{corollary}
Network (\ref{nonhomo}) is $\Xi$-uncontrollable if and only if Network (\ref{hetercompact}) is $\Xi$-uncontrollable, where $\Xi=\{\xi_1,\cdots,\xi_s\}$.
\end{corollary}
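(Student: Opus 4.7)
The plan is to reduce the corollary directly to Theorem \ref{nonhomogeneous} by observing that $\Xi$-uncontrollability, as formulated in Definition \ref{xicontrollable}, is an intrinsic property of the matrix pair $(A,B)$ alone. Both Network (\ref{hetercompact}) and Network (\ref{nonhomo}) share the same system matrix $A=\Omega-\tilde L$ and the same input matrix $B=e_i\otimes\tilde b_i$; they differ only in the exogenous forcing $f(t)$. Consequently the collection of left eigenvectors of $A$ and the orthogonality conditions $\xi^T B=0$ referenced by the definition are literally identical across the two networks.

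To make this precise, I would reuse the change of variables $\tilde x=y+z$ from the proof of Theorem \ref{nonhomogeneous}, with $\dot y=(\Omega-\tilde L)y+(e_i\otimes\tilde b_i)u_o$ and $\dot z=(\Omega-\tilde L)z+f(t)$. The $z$-trajectory is entirely determined by its initial value and $f$ and is unaffected by the control, so steering $\tilde x$ to an arbitrary target $x^*$ in finite time is equivalent to steering $y$ to $x^*-z(t)$; as $x^*$ ranges over the whole state space, so does $x^*-z(t)$. Hence the reachable subspace of Network (\ref{nonhomo}) coincides with that of Network (\ref{hetercompact}), eigenspace by eigenspace of $A$, and in particular the two systems share the same uncontrollable left-eigenvector structure.

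Combining the two observations yields the iff: the maximal linearly independent family of left eigenvectors of $A$ satisfying $\xi^T B=0$ is the same set $\{\xi_1,\dots,\xi_s\}$ in both networks, so Network (\ref{nonhomo}) is $\Xi$-uncontrollable precisely when Network (\ref{hetercompact}) is. The only delicate point is that Definition \ref{xicontrollable} is stated for homogeneous systems $\dot x=Ax+Bu$, while (\ref{nonhomo}) carries the extra term $f(t)$; the $y$--$z$ decomposition is what pins down the correct interpretation of the definition for the affine system, and beyond making this identification explicit I do not expect any further obstacle.
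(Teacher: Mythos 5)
Your proposal is correct and follows essentially the same route as the paper: both reduce the claim to the fact that Networks (\ref{nonhomo}) and (\ref{hetercompact}) share the identical pair $(\Omega-\tilde L,\ e_i\otimes\tilde b_i)$, so that (via the $y$--$z$ decomposition of Theorem \ref{nonhomogeneous}) the forcing term $f(t)$ leaves the controllability/left-eigenvector structure untouched. Your explicit remark that Definition \ref{xicontrollable} must be interpreted for the affine system through this decomposition is a small point of extra care that the paper's (very terse) controllability-matrix argument glosses over, but it is not a different method.
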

\begin{proof}
Network (\ref{nonhomo}) is controllable if and only if $(e_i  \otimes \tilde b_i,(\Omega-\tilde{L})e_i  \otimes \tilde b_i,\cdots,(\Omega-\tilde{L})^{m_1+\cdots+m_n}e_i  \otimes \tilde b_i)$ is of full row rank, which is a necessary and sufficient condition for Network (\ref{hetercompact}) to be controllable. This implies that the controllability matrices of Networks (\ref{nonhomo}) and (\ref{hetercompact}) are exactly the same.
\end{proof}

\begin{theorem}
For matrix pairs $(A_i,b_i)$, $i=1,2,\cdots,n$, if they are $\{\xi_{i1},\cdots,\xi_{is_i}\}$-uncontrollable, respectively, then Network (\ref{hetercompact}) is $\{\xi_{ij}\otimes e_i|i=1,\cdots,n;j=1,\cdots,s_i\}$-uncontrollable if and only if the interconnection topology is leader-follower connected.
\end{theorem}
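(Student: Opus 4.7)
The plan is to do a Kalman controllability decomposition on each pair $(A_i,b_i)$, split the network into a fully controllable sub-network driven by autonomous uncontrollable modes, and then reduce to Theorem \ref{heterthm} via Theorem \ref{nonhomogeneous}. For each $i$ I would choose an invertible $P_i$ so that $P_i A_i P_i^{-1}=\begin{pmatrix}A_i^c & A_i^{cu}\\ 0 & A_i^u\end{pmatrix}$ and $P_i b_i=\begin{pmatrix}b_i^c\\ 0\end{pmatrix}$, with $(A_i^c,b_i^c)$ controllable. The uncontrollable left eigenvectors $\xi_{i1},\ldots,\xi_{is_i}$ are then supported on the uncontrollable block, and by construction $\xi_{ij}^T A_i=\lambda_{ij}\xi_{ij}^T$ together with $\xi_{ij}^T b_i=0$.

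First I would check that each lifted vector $\zeta_{ij}:=\xi_{ij}\otimes e_i$---the stacked vector of length $\sum_k m_k$ equal to $\xi_{ij}$ in block $i$ and zero elsewhere---is a left eigenvector of $\Omega-\tilde L$ orthogonal to every leader column $e_{\rho_k}$. The crucial observation is that every coupling block in $\tilde L$, and every input column $e_{\rho_k}$ for a leader $i_k$, enters agent $i$ through the direction $b_i$ (visible as the factor $e_{m_i}$ in the companion form used in (\ref{hetercompact})). Left multiplication by $\xi_{ij}^T$ therefore annihilates all coupling contributions and input columns simultaneously, leaving only $\xi_{ij}^T A_i=\lambda_{ij}\xi_{ij}^T$ in block $i$. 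Hence the $\zeta_{ij}$'s always lie in the left uncontrollable eigenspace of $\Omega-\tilde L$, regardless of topology; this supplies clause 1) of Definition \ref{xicontrollable}.

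The substance is the converse---that every left eigenvector of $\Omega-\tilde L$ orthogonal to the input matrix lies in $\Span\{\zeta_{ij}\}$ precisely when the topology is leader-follower connected. In the Kalman coordinates the uncontrollable block $x_i^u$ receives neither $u_o$ nor any coupling, since both reach agent $i$ only through $b_i$; hence $\dot x_i^u=A_i^u x_i^u$ is autonomous. Each controllable block $x_i^c$ then sees an exogenous, piecewise-analytic drive $f_i(t)=A_i^{cu} x_i^u(t)+b_i^c\sum_{j}a_{ij}\beta_j^{uT}x_j^u(t)$ coming from the autonomous parts, while the $x^c$-to-$x^c$ interactions form exactly a heterogeneous-dynamic network with controllable agents $(A_i^c,b_i^c)$ and the same interconnection topology. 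Theorem \ref{nonhomogeneous} lets me discard the forcing $f(t)$ without changing controllability, and the reduced $x^c$-network is controllable, by Theorem \ref{heterthm}, if and only if the topology is leader-follower connected.

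Combining the two directions: leader-follower connectivity makes the only left eigenvectors of $\Omega-\tilde L$ orthogonal to the input the lifted modes $\zeta_{ij}$, so the network is exactly $\{\zeta_{ij}\}$-uncontrollable; conversely, $\{\zeta_{ij}\}$-uncontrollability forbids any further topology-induced uncontrollable direction in the reduced $x^c$-network, which by Theorem \ref{heterthm} forces leader-follower connectivity. The main obstacle I anticipate is the coordinate bookkeeping: identifying the $\xi_{ij}\otimes e_i$ of the statement with the Kalman-lifted eigenvectors and checking that the $\tilde L$-coupling in (\ref{hetercompact}) really factors through $b_i$ on the destination side. Once that identification is made cleanly, the proof reduces to a short composition of Theorems \ref{heterthm} and \ref{nonhomogeneous}.
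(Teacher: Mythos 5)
Your proposal is correct and follows essentially the same route as the paper's own proof: the paper also performs the per-agent controllability (Kalman) decomposition, uses the fact that couplings and inputs enter each agent only through the $b_i$ direction so that the lifted vectors $\xi_{ij}\otimes e_i$ are uncontrollable left eigenmodes, treats the autonomous uncontrollable states as the inhomogeneous forcing handled by Theorem \ref{nonhomogeneous}, and applies Theorem \ref{heterthm} to the reduced controllable sub-network to obtain the leader-follower-connectivity equivalence. The only cosmetic difference is that you verify clause 1) of Definition \ref{xicontrollable} directly on the closed-loop matrix $\Omega-\tilde{L}$, whereas the paper states it for the block-diagonal open-loop pair first; the substance is identical.
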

\begin{proof}
Suppose that the controllability decomposition of $\dot{x}_i=A_ix_i+b_iu_i$ is
\begin{equation*}
  \left( {\begin{array}{*{20}c}
   {\dot {\bar {x} }_{ic} }  \\
   {\dot {\bar {x} }_{i \bar c} }  \\
\end{array}} \right) = \left( {\begin{array}{*{20}c}
   {\bar A_{ic} } & {\bar A_i }  \\
   0 & {\bar A_{i\bar c} }  \\
\end{array}} \right)\left( {\begin{array}{*{20}c}
   {\bar x_{ic} }  \\
   {\bar x_{i\bar c} }  \\
\end{array}} \right) + \left( {\begin{array}{*{20}c}
   {\bar b_{ic} }  \\
   0  \\
\end{array}} \right)u_i,
\end{equation*}
where $\bar{A}_{ic}=\left( {\begin{array}{*{20}c}
   0 & {I_{m_i - s_i - 1} }  \\
   -\alpha_{i0} & -\alpha_i^T  \\
\end{array}} \right), \tilde{b}_i=e_{m_i-s_i}$, i.e., $(A_i,b_i)$ is $\{\xi_{i1},\cdots,\xi_{is_i}\}$-uncontrollable, and there exists an invertible $Q_i\in\mathbb{R}^{m_i\times m_i}$ such that $Q_i(\xi_{i1},\cdots,\xi_{is_i})=(0,I_{s_i})^T$. Apparently, $\{\xi_{ij}\otimes e_i|i=1,\cdots,n;j=1,\cdots,s_i\}$ are linearly independent left eigenvectors of $A$ satisfying $\xi_i^Tb=0$, where $A=\diag(A_1,\cdots,A_n),$ $ b=\diag(b_1,\cdots,b_n)$ and $\diag(Q_1,\cdots,Q_n)\cdot \diag(\xi_{i1},\cdots,\xi_{i,s_1})=\diag(0,I_{s_i})$. For any left eigenvector of $A$, denoted as $\tilde\xi$, satisfying $\tilde\xi^Tb=0$, it holds that $\tilde\xi\in span\{\xi_{ij}\otimes e_i|i=1,\cdots,n;j=1,\cdots,s_i\}$, otherwise, $\diag(Q_1,\cdots,Q_n)\tilde\xi\notin span\{e_2(s_i),\cdots,e_{s_i}(s_i)\}$, which means there exists at least one $i$ such that $(A_i,b_i)$ is not $\{\xi_{i1},\cdots,\xi_{is_i}\}$-uncontrollable. With the protocol $u_i  =  - \alpha _i^T x_i  + \sum\limits_{j \in N_i } {a_{ij}  (\beta_j^T T_j x_j  - \beta_i^T T_i x_i )}  + u_{io}$, by Theorems \ref{heterthm} and \ref{nonhomogeneous}, one obtains that $\dot {\bar x}_c  = \bar A_c \bar x_c  + \bar A_{\bar c} \bar x_{\bar c}  + \bar b_c u$ is controllable if and only if $(\bar A_c,\bar b_c)$ is controllable and the interconnection topology is leader-follower connected, where $\dot {\bar x}_c=(\dot {\bar x}_{1c}, \dot {\bar x}_{2c}, \cdots, \dot {\bar x}_{nc})$, $\bar A_c=\diag(\bar A_{1c}, \bar A_{2c}, \cdots, \bar A_{nc})$, and $u=(u_1,u_2,\cdots,u_n)$. Considering that Network (\ref{hetercompact}) is $\{\xi_{ij}\otimes e_i|i=1,\cdots,n;j=1,\cdots,s_i\}$-uncontrollable if and only if  $\dot {\bar x}_c  = \bar A_c \bar x_c  + \bar A_{\bar c} \bar x_{\bar c}  + \bar b_c u$ is controllable under the protocol, and $(\bar A_c,\bar b_c)$ is controllable (which is proved), according to Definition \ref{xicontrollable}, one declares that Network (\ref{hetercompact}) is $\{\xi_{ij}\otimes e_i|i=1,\cdots,n;j=1,\cdots,s_i\}$-uncontrollable.
\end{proof}

\section{Examples}

In this section, two examples are provided to illustrate the main results in this paper.
\begin{example}\label{SNFE}
Consider Network (\ref{highcompact}) with five third-order agents, i.e., $\dot{x}_i^{(3)}=u_i,~i=1,2,3,4,5$. Suppose that the interconnection topologies of the first-order, second-order, third-order information are depicted in Figures \ref{examp11}, \ref{examp12} and \ref{examp13}, respectively. Obviously, if agent $1$ is the single leader, none of the graphs is controllable, even if Figure \ref{examp11} is leader-follower connected. However, the union graph of them is given in Figure \ref{examp10}, which is controllable. Actually, if one selects $k_1=k_2=k_3=1$, the high-order network becomes controllable.
\end{example}

\begin{figure}
\begin{minipage}[t]{0.5\linewidth}
\centering
\includegraphics[width=1.3in]{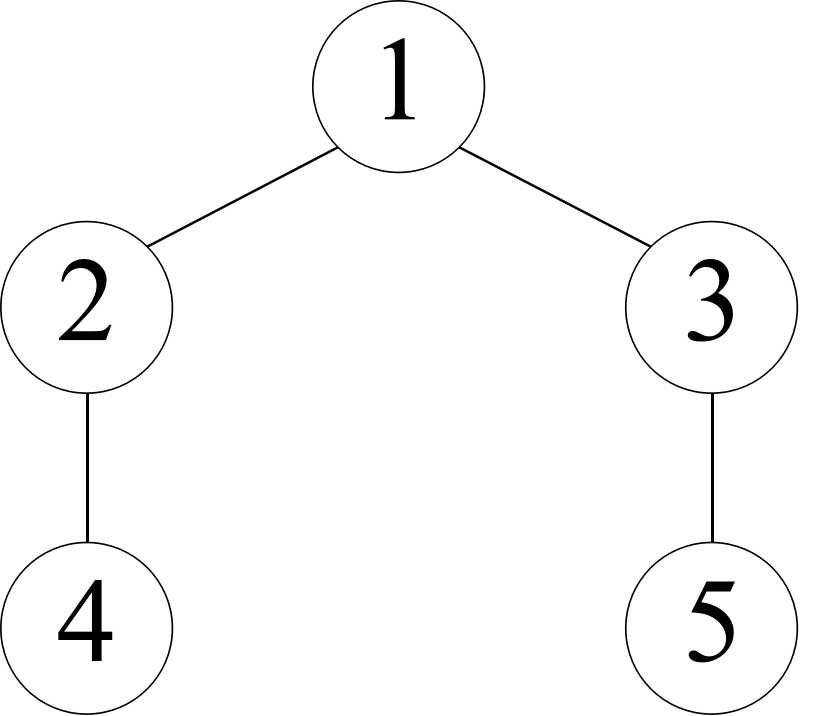}
\caption{The first-order information topology of system (\ref{highcompact}).}\label{examp11}
\end{minipage}%
~
\begin{minipage}[t]{0.5\linewidth}
\centering
\includegraphics[width=1.3in]{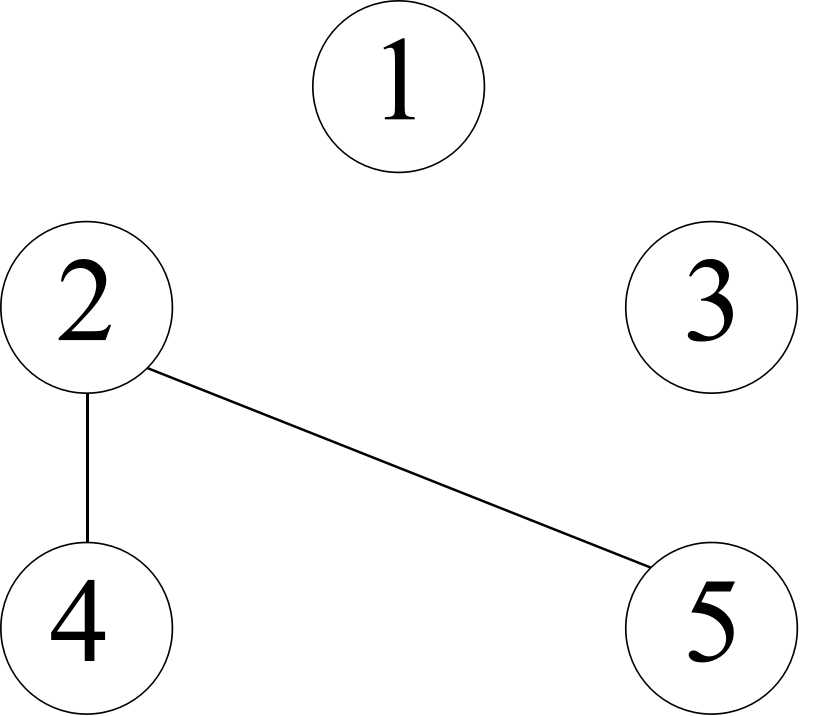}
\caption{The second-order information topology of system (\ref{highcompact}).}\label{examp12}
\end{minipage}
\end{figure}

\begin{figure}
\begin{minipage}[t]{0.5\linewidth}
\centering
\includegraphics[width=1.3in]{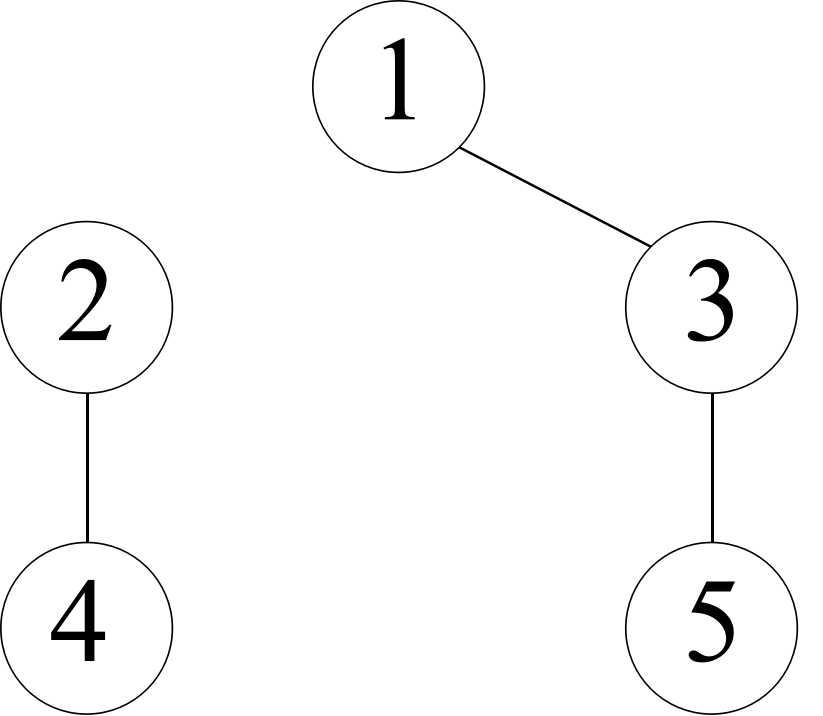}
\caption{The third-order information topology of system (\ref{highcompact}).}\label{examp13}
\end{minipage}%
~
\begin{minipage}[t]{0.5\linewidth}
\centering
\includegraphics[width=1.3in]{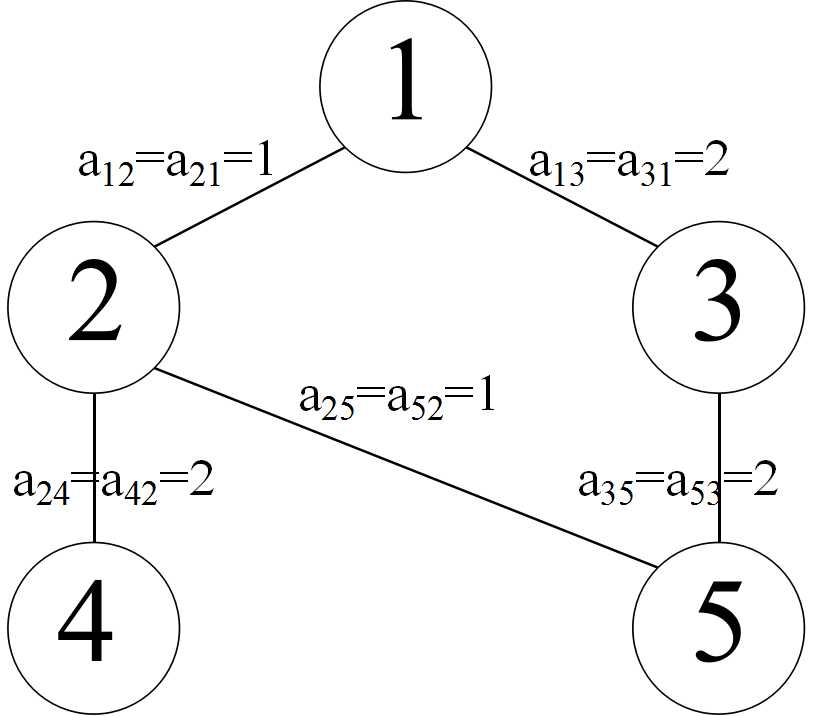}
\caption{The union graph of Figures 2, 3 and 4.}\label{examp10}
\end{minipage}
\end{figure}

\begin{example}\label{2}
For Network (\ref{hetercompact}) with heterogeneous dynamic agents, the interconnection topology is depicted in Figure \ref{examp2}. Suppose that the dynamics of the agents are $\dot{x}_1=u_1$, $\dot{x}_2=\left(
                              \begin{array}{cc}
                                1 & 1 \\
                                1 & 0 \\
                              \end{array}
                            \right)x_2+\left(\begin{array}{c}
                                         1 \\
                                         1
\end{array}\right)
u_1$, $\dot{x}_3=\left(
                   \begin{array}{ccc}
                     1 & 0 & 0 \\
                     1 & 1 & 0 \\
                     1 & 2 & 1 \\
                   \end{array}
                 \right)
x_3+\left(\begin{array}{c}
                                         1 \\
                                         0 \\
                                         1
\end{array}\right)
u_1$, respectively. Apparently, if agent $1$ is selected as the leader, the interconnection topology is leader-follower connected but $\left(\left(
                                                                                                                                          \begin{array}{ccc}
                                                                                                                                            2 & -1 & -1 \\
                                                                                                                                            -1 & 1 & 0 \\
                                                                                                                                            -1 & 0 & 1 \\
                                                                                                                                          \end{array}
                                                                                                                                        \right)
,\left(
   \begin{array}{c}
     1 \\
     0 \\
     0 \\
   \end{array}
 \right)
\right)$ is not controllable. However, if the feedback gains are selected as $\beta_1=1,~\beta_2=(2,0)^T,~\beta_3=(3,0,0)^T$, the network becomes controllable. Therefore, the effect of $\beta_1,\beta_2,\beta_3$ has two aspects. One is to make the agents able to communicate with each other, and the other is actually to benefit achieving controllability. In addition, if any of the two edges is broken, the network becomes uncontrollable immediately.
\end{example}
\begin{figure}
  \centering
  \includegraphics[width=1.6in]{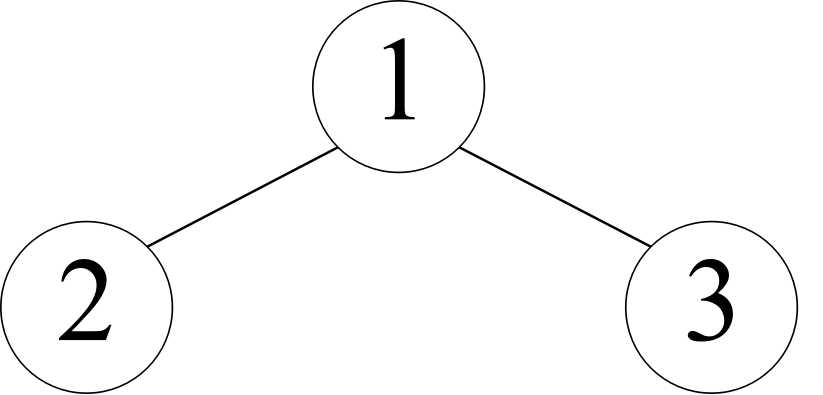}\\
  \caption{A graph with 3 nodes.}\label{examp2}
\end{figure}


\section{Conclusions}

In conclusion, we considered controllability of heterogeneous multi-agent networks. The main results in this paper provided graphic necessary and sufficient conditions on controllability of heterogeneous-topology networks and heterogeneous-dynamic networks. For an MAN with high-order dynamic agents, it is controllable if and only if there exists a Laplacian matrix, which is a linear combination of the Laplacian matrices of each order information, whose corresponding topology is controllable, and the topology corresponding to the first-order information should be leader-follower connected. For an MAN with different generic-linear dynamic agents, the necessary and sufficient condition for controllability of the network is that each agent contains a controllable dynamic and the interconnection topology of the network is leader-follower connected.
All the results in this paper are suitable for weighted topologies and multiple leaders.

\end{document}